\documentclass[11pt,reqno]{amsart}
\usepackage[margin=1.12in]{geometry}
\usepackage{amsmath,amssymb,amsthm,mathtools}
\usepackage{aliascnt}
\usepackage{microtype}
\usepackage{enumitem}
\usepackage[hidelinks]{hyperref}\hypersetup{
	colorlinks,
	citecolor=blue,
	linkcolor=blue,
	urlcolor=blue}
\usepackage[nameinlink,capitalise]{cleveref}
\usepackage{comment}
\usepackage{cite}
\newtheorem{theorem}{Theorem}[section]
\newaliascnt{proposition}{theorem}
\newtheorem{proposition}[proposition]{Proposition}
\aliascntresetthe{proposition}
\newaliascnt{lemma}{theorem}
\newtheorem{lemma}[lemma]{Lemma}
\aliascntresetthe{lemma}
\newaliascnt{corollary}{theorem}
\newtheorem{corollary}[corollary]{Corollary}
\aliascntresetthe{corollary}
\theoremstyle{remark}
\newaliascnt{remark}{theorem}

\aliascntresetthe{remark}
\crefname{theorem}{Theorem}{Theorems}
\crefname{proposition}{Proposition}{Propositions}
\crefname{lemma}{Lemma}{Lemmas}
\crefname{corollary}{Corollary}{Corollaries}
\crefname{remark}{Remark}{Remarks}

\DeclareMathOperator{\Isom}{Isom}
\DeclareMathOperator{\Stab}{Stab}

\DeclareMathOperator{\sys}{sys}
\DeclareMathOperator{\Span}{span}

\newcommand{\tn}{\textnormal}

\newcommand{\R}{\mathbb{R}}
\newcommand{\C}{\mathbb{C}}
\newcommand{\Z}{\mathbb{Z}}

\newcommand{\sltc}{\tn{SL}_2(\C)}

\newcommand{\psltc}{\tn{PSL}_2(\C)}

\newcommand{\hts}{\mathbb{H}^3}

\newcommand{\oton}{of type I}

\newcommand{\gt}{\mathcal{G}_2}

\newcommand{\mg}{\mathcal{G}}

\newcommand{\pqrs}{\frac{pz+q}{rz+s}}
\newcommand{\abcd}{\frac{az+b}{cz+d}}

\newcommand{\isom}{\tn{Isom}}

\newcommand{\tcpartial}{\partial^{(3)} } 

\def\burl#1{\url{#1}}

\title[Growth of simple closed geodesics]{Growth of Simple Closed Geodesics in closed  Hyperbolic $3$-Manifolds}
\date{}
\author{Xiaolong Hans Han}
\address{Center for Mathematics and Interdisciplinary Sciences, Fudan University, Shanghai 200433, China} 
\address{Shanghai Institute for Mathematics and Interdisciplinary Sciences (SIMIS), Shanghai 200433, China}
\email{xhh@simis.cn}
\author{Bohan Yang}
\address{Shanghai Institute for Mathematics and Interdisciplinary Sciences (SIMIS), Shanghai 200433, China}
\address{Research Institute of Intelligent Complex Systems, Fudan University, Shanghai
200433, China} 
\email{bhyang@simis.cn}

\begin{document}
\begin{abstract}
Let $M$ be a closed hyperbolic $3$-manifold. We prove that the number of primitive simple closed geodesics of length at most $L$ is  $e^{(2+o(1))L}$ as $L\to \infty$. 
\end{abstract}
\maketitle

\section{Introduction}
A geodesic in a manifold is \emph{simple} if its image does not have transverse self-intersections. A. Reid posed the question whether every finite-volume  hyperbolic $3$-manifold contains infinitely many simple closed geodesics (see Problem 3.16 of R. Baykur--R. Kirby--D. Ruberman [\citenum{K3}]).
T. Chinburg--Reid \cite{CR} construct infinitely many noncommensurable closed hyperbolic $3$-manifolds all of whose closed geodesics are simple. 

For a manifold $M$, define
\begin{align*}
N_M(L)
&=
\#\{\gamma:\gamma\text{ a primitive closed geodesic},\ \ell(\gamma)\le L\}, \\
N_M^{\mathrm{simp}}(L)
&=
\#\{\gamma:\gamma\text{ a primitive simple  closed geodesic},\ \ell(\gamma)\le L\}.
\end{align*}
\begin{theorem}\label{thm:main}
Let $M$ be a closed  hyperbolic $3$-manifold. Then
$$
\lim_{L\to\infty}\frac{1}{L}\log N_M^{\mathrm{simp}}(L)=2.
$$
Equivalently,
$$
N_M^{\mathrm{simp}}(L)=e^{(2+o(1))L}.
$$
\end{theorem}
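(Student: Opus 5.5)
The upper bound is immediate from the prime geodesic theorem (Margulis): for a closed hyperbolic $3$-manifold the volume entropy is $2$, so $N_M(L)\sim e^{2L}/(2L)$. Since $N_M^{\mathrm{simp}}(L)\le N_M(L)$, this gives $\limsup_{L\to\infty}\frac1L\log N_M^{\mathrm{simp}}(L)\le 2$. All the work is in the lower bound. For every $\epsilon>0$ I need at least $e^{(2-\epsilon)L}$ distinct primitive simple closed geodesics of length at most $L$, for all large $L$.

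\textbf{Lower bound.} The idea is that in dimension $3$ a generic closed geodesic is simple. Transverse self-intersection of a curve in a $3$-manifold is a codimension-one condition. More precisely, two geodesic arcs in $M$ meeting at a point is a codimension-$1$ condition on the pair, because one extra parameter is needed to make them intersect. So self-intersection should be a measure-zero phenomenon that can be perturbed away.

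To make this quantitative I will use closing and mixing of the frame flow (or the geodesic flow). Fix a small $\delta>0$. By the exponential mixing of the geodesic flow on $T^1M$ together with the Anosov closing lemma, the number of closed geodesics of length in $[L-\delta,L]$ whose lift passes through a fixed small flow box is $\gtrsim e^{2L}/L$, and these are equidistributed.

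Non-simplicity means $\gamma(s)=\gamma(t)$ for some $s\ne t$ with distinct tangents. I will estimate the number of closed geodesics of length $\le L$ having such a coincidence within distance $\eta$ through a counting argument. A pair of times $(s,t)$ with $|s-t|$ not tiny gives two points of $T^1M$ whose basepoints are $\eta$-close. By mixing, this is a condition of measure $\asymp\eta^3$. There are about $L^2$ pairs of parameters, discretized at unit scale. So the proportion of closed geodesics with an $\eta$-near self-intersection is $\lesssim L^2\eta^3$.

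Distinct closed geodesics with an exact self-intersection lie in a set that is shrinking as $\eta\to0$. Taking $\eta=e^{-\epsilon L}$ makes this proportion $\le L^2e^{-3\epsilon L}$, which tends to $0$. Hence most closed geodesics have no self-intersection closer than $\eta$, and in particular are simple. The one regime this argument misses is $|s-t|$ small. That regime is handled separately: short geodesic segments are embedded, and near-return at short times would force a short closed loop, which is impossible since the injectivity radius of $M$ is positive.

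\textbf{Main obstacle.} The difficulty is making the $\eta^3$ estimate uniform down to the exponentially small scale $\eta=e^{-\epsilon L}$, since mixing only controls scales down to about $e^{-cL}$.

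To get around this, I would first try replacing the exact-count statement by a construction. I would build closed geodesics by concatenating $N\approx L/T$ long segments of length $T$ chosen freely from a $\delta$-separated set of about $e^{2T}$ orthonormal-direction choices at a basepoint, and then close them up via the closing lemma, which gives a quasi-geodesic shadowed by a true geodesic. Then I would count the choice-sequences that produce a self-intersection. Each of the roughly $N^2$ pairs of segments intersecting is forbidden for a fraction $\lesssim e^{-cT}$ of choices, by a transversality and volume estimate in $\mathbb H^3$: two geodesic segments of length $T$ pass within distance $r$ of each other with probability about $r\cdot T^2/\mathrm{vol}$. The actual intersection of the shadowing geodesic is controlled by the $e^{-cT}$-closeness of shadowing.

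With $T$ large but fixed, this yields at least $(e^{(2-\epsilon)T})^{N}=e^{(2-\epsilon)L}$ simple closed geodesics. Finally, distinctness and primitivity follow from the injectivity of the coding up to cyclic shifts, which costs at most a factor $N$. This establishes $\liminf_{L\to\infty}\frac1L\log N_M^{\mathrm{simp}}(L)\ge 2$, and combined with the upper bound it proves the theorem.
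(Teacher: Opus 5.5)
\textbf{There is a genuine gap in the lower bound.} Your upper bound is the paper's. The lower bound fails at the decisive counting step.

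\textbf{The counting does not close with $T$ fixed.} In the construction you take a union bound over $\asymp N^2$ pairs of segments, each bad with probability $\lesssim e^{-cT}$. You keep $T$ fixed while $N\asymp L/T\to\infty$, so the total $N^2e^{-cT}$ exceeds $1$ and nothing survives.

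This is not just a lossy union bound. By your own heuristic, the expected number of $e^{-cT}$-near approaches among the $N$ segments is of order $N^2T^2e^{-cT}\asymp L^2e^{-cT}\to\infty$. A test of the form ``no two segments come $r$-close,'' with $r$ independent of $L$, therefore rejects (heuristically) almost every sequence. Exact self-intersection is far rarer than $r$-near self-intersection at any fixed $r$, so a proof needs a criterion whose precision improves with $L$. The claim that fixed $T$ yields $(e^{(2-\epsilon)T})^N$ simple geodesics is unsupported.

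\textbf{Two supporting steps also fail as written.}
\begin{enumerate}[label=(\alph*)]
\item The $e^{-cT}$ shadowing holds only away from the breakpoints. Near them the closed geodesic is merely $O(1)$-close to the broken path, so self-intersections there are invisible to your test. If your segments are loops at a common basepoint, the broken path even passes through that point $N$ times.
\item The bound $rT^2/\mathrm{vol}$ is a heuristic for independent Liouville-random segments. It is not a bound for a discrete, basepoint-anchored family at exponentially small scale.
\end{enumerate}

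\textbf{The preliminary heuristic.} Two unit arcs coming $\eta$-close is a codimension-one event of measure $\asymp\eta$, not $\eta^3$. Also, an exact intersection is $\eta$-near for every $\eta$. So the real obstacle there is not reaching $\eta=e^{-\epsilon L}$. It is proving an effective two-point equidistribution for closed geodesics at some scale with $L^2\eta\to0$, which you do not supply.

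\textbf{The missing idea is algebraic rather than metric.} This is what lets the paper work with a \emph{fixed} alphabet.
\begin{enumerate}[label=(\roman*)]
\item If $A_g\cap hA_g\ne\varnothing$, the cross ratio of the endpoints is real. This forces a nonzero real polynomial $P_h$ of degree at most $4$ to vanish on a lift of $g$ to $\operatorname{SL}_2(\mathbb{C})$ (\cref{prop:collision}).
\item Quasi-geodesic control reduces every self-intersection of a word of length $n$ to one of $O(n^2)$ configurations. In each, $h=\sigma b^{-1}$ depends only on a block of length at most $n/2$ (\cref{prop:finite}).
\item A product of the remaining at least $n/2$ independent letters lies in the zero set of a fixed nonzero polynomial with probability at most $C_Se^{-c_Sn}$ (\cref{prop:polynomial-escape}).
\end{enumerate}
The bound in (iii) comes from a Cauchy--Schwarz induction on codimension driven by a growth gap. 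Stabilizers of polynomial subspaces meet $\Gamma^+$ in elementary or plane-preserving subgroups, which have $O(e^R)$ orbit points in any $R$-ball, while $|S|>e^{U_S}$. Because each configuration is excluded with probability exponentially small in $n$, not in a fixed scale parameter, the $O(n^2)$ union bound is harmless.

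\textbf{What rescuing your route would require.} You would at least need $T\gtrsim C\log L$; the constant-factor losses per segment then cost only $e^{-O(L/T)}=e^{-o(L)}$. You would also need:
\begin{enumerate}[label=(\alph*)]
\item effective equidistribution of your segment family at scale $L^{-C'}$;
\item a separate analysis near the breakpoints;
\item control of the dependence between neighbouring segments.
\end{enumerate}
None of this is in the proposal.
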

 The prime geodesic theorem of G. Margulis \cite{Margulis} implies that $N_M(L)\sim \frac{e^{2L}}{2L}$. By M. Mirzakhani [\citenum{mmGrowthSimpleClosedGeodesics}], for $S$ a complete hyperbolic surface of genus $g$ with $n$ cusps, $N_S^{\mathrm{simp}}(L)\sim \frac{B_S}{b_{g,n}}\cdot L^{6g-6+2n}, $ where  $B_S$ (resp. $b_{g,n}$) is a constant depending on the hyperbolic metric (resp. $g, n$). 
\Cref{thm:main} was inspired by considering the topological properties of random closed geodesics on hyperbolic surfaces as in e.g. F. Arana-Herrera \cite{ahfEffectiveFilling} and Y. Wu--Y. Xue \cite{wxPrimeGeodesicTheorem} (see also the references therein). 

S. Kuhlmann \cite{KuhlmannCusped,KuhlmannClosed} proves that every cusped orientable hyperbolic $3$-manifold contains infinitely many simple closed geodesics and obtains a sufficient criterion for the closed case; F. Xia \cite{Xia} removes the orientability hypothesis in the cusped case. 
\begin{corollary}\label{cor:finite-volume}
Every finite-volume hyperbolic $3$-manifold contains infinitely many simple closed geodesics.
\end{corollary}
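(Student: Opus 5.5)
The plan is to split according to whether $M$ is compact. A finite-volume hyperbolic $3$-manifold is either closed, or noncompact with finitely many cusps, and in each case I would invoke a different earlier result.

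\emph{Closed case.} Suppose $M$ is closed. By \cref{thm:main},
$$
\frac{1}{L}\log N_M^{\mathrm{simp}}(L)\to 2 \quad\text{as } L\to\infty .
$$
In particular $\log N_M^{\mathrm{simp}}(L)\ge L$ for all sufficiently large $L$, so $N_M^{\mathrm{simp}}(L)\to\infty$. Since $N_M^{\mathrm{simp}}(L)$ counts distinct primitive simple closed geodesics, $M$ contains infinitely many geometrically distinct simple closed geodesics.

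\emph{Cusped case.} Suppose $M$ is noncompact of finite volume. If $M$ is orientable, I would invoke Kuhlmann's theorem \cite{KuhlmannCusped}, which gives infinitely many simple closed geodesics. If $M$ is nonorientable, I would invoke Xia's extension \cite{Xia}, which removes the orientability hypothesis in the cusped case. Together these cover every cusped manifold.

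The one point needing care is whether \cref{thm:main} is meant to include nonorientable closed manifolds. As stated, it assumes only that $M$ is a closed hyperbolic $3$-manifold, so I would apply it directly in that generality.

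If the proof of \cref{thm:main} turned out to use orientability, I would not try to reduce to the orientable double cover. A simple geodesic upstairs can project to a geodesic that is no longer simple, because two distinct lifts may cross. Instead, I would check that the counting argument behind \cref{thm:main} is local in the frame bundle and never uses an orientation. I expect this check to be the main (and only) obstacle; the rest of the argument is immediate.
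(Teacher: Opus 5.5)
Your proposal is correct and is essentially the paper's argument: the corollary follows from \cref{thm:main} in the closed case and from Kuhlmann \cite{KuhlmannCusped} and Xia \cite{Xia} in the cusped case. Your worry about orientability is already handled in the paper, since \cref{sec:prelim} explicitly allows $M$ to be nonorientable and \cref{prop:collision} treats orientation-reversing $h\in\Gamma$ separately, so \cref{thm:main} applies directly as you intended.
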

\begin{theorem}\label{thm: nonsimple dense in arithmetic}
    Let $M$ be a closed arithmetic hyperbolic $n$-manifold of type I. Then the union of unit vectors tangent to nonsimple closed geodesics of $M$ is dense in $T^1M$. 
\end{theorem}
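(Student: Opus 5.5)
The plan is to reduce to dimension two. Type I arithmetic manifolds contain many closed immersed totally geodesic surfaces, and inside a closed hyperbolic surface nonsimple closed geodesics are easy to produce and are dense in the unit tangent bundle. The one point needing care is that in dimension $n\ge 3$ a nonsimple closed geodesic must have an \emph{exact} transverse self-intersection. Two strands of a geodesic merely passing close to each other do not count, and this is generically all one gets. Working inside a totally geodesic surface forces exact intersections, because transverse crossings of curves in a surface are stable.

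\textbf{Step 1: setup.} Write $M=\Gamma\backslash\mathbb{H}^n$ with $\Gamma$ commensurable with $\mathrm{SO}(f,\mathcal{O}_k)$. Here $k$ is a totally real field and $f$ is a quadratic form over $k$ of signature $(n,1)$ whose nontrivial Galois conjugates are definite. Up to $k$-equivalence, diagonalize $f=-a_0x_0^2+a_1x_1^2+\dots+a_nx_n^2$, with the signs of the Galois conjugates of the $a_i$ making those conjugates definite.

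\textbf{Step 2: one closed surface.} Let $W_0=\Span_k(e_0,e_1,e_2)$. Then $f|_{W_0}$ has signature $(2,1)$ at the identity embedding and is definite at the other embeddings. By Borel--Harish-Chandra, $\Gamma\cap \mathrm{SO}(W_0\otimes\R)$ is a cocompact lattice in $\mathrm{SO}(2,1)$. Hence the totally geodesic plane $P_0=\mathbb{P}(W_0\otimes\R)\cap\mathbb{H}^n$ projects to a closed immersed totally geodesic surface in $M$. More precisely, $\Gamma_{P_0}\backslash P_0$ is a closed hyperbolic $2$-orbifold, and after passing to a finite-index subgroup it is a closed surface.

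\textbf{Step 3: a dense family of planes.} The same holds for every $gW_0$ with $g\in\mathrm{SO}(f,k)$, since such $g$ commensurate $\Gamma$. Moreover $\mathrm{SO}(f,k)$ is dense in $\mathrm{SO}(f,\R)\cong\mathrm{SO}(n,1)$ by weak approximation. Therefore the planes $gP_0$ are dense in the space of totally geodesic $2$-planes of $\mathbb{H}^n$. In particular, given $v\in T^1M$ and $\epsilon>0$, we can find a lift $\tilde v$ and some $gP_0$ containing a unit vector $\epsilon$-close to $\tilde v$. Its image $S$ is a closed immersed totally geodesic surface in $M$ with a unit tangent vector $\epsilon$-close to $v$.

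\textbf{Step 4: the surface case.} Let $\Sigma$ be a closed hyperbolic surface (or orbifold) covering $S$, with $\Gamma_\Sigma\le\Gamma$. I will show that vectors tangent to nonsimple closed geodesics are dense in $T^1\Sigma$.
\begin{itemize}
\item By Anosov closing, closed geodesics are dense in $T^1\Sigma$. So pick a hyperbolic $\gamma\in\Gamma_\Sigma$ whose axis $A\subset gP_0$ has a tangent vector near $\tilde v$.
\item Pick $\eta\in\Gamma_\Sigma$ whose axis $B$ crosses $A$ transversally at a point $p$. Such $\eta$ exists because $\Gamma_\Sigma$ is a cocompact surface group, not elementary.
\item For large $N$, the axis of $\gamma^N\eta$ fellow-travels $A$ for a long stretch, then follows the excursion along $B$.
\end{itemize}
In the plane, the $\gamma$-translates of this axis then cross it transversally. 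Concretely, the axes of $\gamma^N\eta$ and $\gamma(\gamma^N\eta)\gamma^{-1}$ have endpoints on $\partial P_0$ that interleave, for $N$ large. Interleaving of endpoints in $\partial\mathbb{H}^2$ gives an exact transverse intersection. The element $\gamma^N\eta$ is primitive after taking a root if necessary, and its axis contains vectors arbitrarily close to $\tilde v$ as $N\to\infty$.

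\textbf{Step 5: transfer to $M$.} Project the resulting closed geodesic to $M$. Two distinct lifts in $gP_0$ crossing transversally remain distinct geodesics of $\mathbb{H}^n$ meeting at a point at a nonzero angle. So the projection has a transverse self-intersection in $M$ and is nonsimple, with a tangent vector within $2\epsilon$ of $v$.

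\textbf{Main obstacle.} I expect the main obstacle to be the interleaving claim in Step 4, namely making the self-intersection exact rather than approximate. I would handle it by a direct cross-ratio or endpoint computation: the endpoints of the axis of $\gamma^N\eta$ converge to $\gamma^{\pm}$-related points, namely $\gamma^+$ and $\eta^{-1}\gamma^-$. Conjugating by $\gamma$ moves the repelling endpoint while keeping the attracting one near $\gamma^+$, and since $B$ is transverse to $A$, the two pairs separate each other on the circle $\partial P_0$.
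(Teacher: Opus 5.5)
Your Steps 1--3 and 5 are fine, modulo routine details, and they give a more elementary route than the paper. The paper uses Ratner--Shah equidistribution of infinitely many noncommensurable totally geodesic surfaces, plus an induction through hypersurfaces when $n\ge 4$. You instead translate one $k$-rational $2$-plane by the dense commensurator $\mathrm{SO}(f,k)$, which handles all $n$ at once.

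\textbf{The gap is in Step 4.} That is where the real content lies, and your interleaving claim there is false. Normalize $\gamma(z)=\lambda z$ with $\lambda>1$, and take $\eta(z)=\frac{z\cosh t+\sinh t}{z\sinh t+\cosh t}$ with $t>0$; then $A$ and $B$ cross. The fixed points of $\gamma^N\eta$ are:
\begin{itemize}
\item the repelling point $r_N\to\eta^{-1}(0)=-\tanh t$;
\item the attracting point $a_N\sim\lambda^N\coth t$.
\end{itemize}
So $a_N$ tends to $\gamma^+=\infty$ from the side containing $\eta^+=1$. Hence $\gamma$ maps $[r_N,a_N]$ onto $[\lambda r_N,\lambda a_N]$, which strictly contains it. The endpoint pairs of the two axes are nested, not interleaved, and the axes are disjoint. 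Your heuristic in the last paragraph cannot decide this: both attracting endpoints tend to $\gamma^+$, and the outcome depends on the side from which they approach.

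Worse, no translate rescues $\gamma^N\eta$ in general. Suppose $\gamma,\eta$ project to simple closed geodesics of $\Sigma$ meeting once, e.g.\ $a_1,b_1$ in a standard genus-$2$ basis. Then $\gamma^N\eta$ is freely homotopic to the Dehn twist $T_\gamma^{\pm N}(\eta)$, so its geodesic is simple. The case where the closed geodesic near $\tilde v$ is simple is exactly the one that needs proof. A smaller inaccuracy: the axis of $\gamma^N\eta$ converges to the geodesic from $\eta^{-1}\gamma^-$ to $\gamma^+$, not to $A$. So the claim ``its axis contains vectors close to $\tilde v$'' only holds after conjugating.

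\textbf{A repair: let both exponents grow.} Put $g_N=\gamma^{2N}\eta^{2N}$ and consider two of its conjugates:
\begin{itemize}
\item $\gamma^{-N}g_N\gamma^{N}=\gamma^N\eta^{2N}\gamma^N$. It maps compact sets avoiding $\gamma^-$ into small neighbourhoods of $\gamma^+$, and its inverse does the same with $\gamma^\pm$ swapped. So its axis converges to $A$.
\item $\eta^{N}g_N\eta^{-N}=\eta^N\gamma^{2N}\eta^N$. By the same argument, its axis converges to $B$.
\end{itemize}
Interleaving of endpoints is an open condition and $A,B$ cross. So for large $N$ these two lifts of the closed geodesic of $g_N$ are distinct and cross transversally. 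The first converges to $A$ on compacta, so the closed geodesic has a tangent vector near the projection of $\tilde v$.

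Alternatively, do what the paper does and cite Birman--Series: tangent vectors to simple complete geodesics are nowhere dense in $T^1\Sigma$. Together with density of closed geodesics, this gives density of nonsimple ones. With either repair, your Step 5 finishes the proof.
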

\noindent By J. Birman--C. Series \cite{bsGeodesicsBoundedIntersection}, for a finite-area complete hyperbolic surface $S$, the set of unit tangent vectors to simple complete geodesics is nowhere dense in $T^1S$. Since  vectors tangent to closed geodesics are dense in $T^1S$, the unit tangent vectors to nonsimple closed geodesics are dense in $T^1S$.
\Cref{thm: nonsimple dense in arithmetic} relies on the rigidity theorems of M. Ratner and N. Shah \cite{rmRatner'stheorem,snRatnerTheorem}. 
We recall the definition of arithmetic hyperbolic manifolds of type I in \cref{sec:prelim}.

C. Adams--J. Hass--P. Scott \cite{AHS} show that the Fuchsian quotient associated with the thrice-punctured sphere is the only complete non-elementary orientable hyperbolic $3$-manifold containing no simple closed geodesic. For comparison, H.-B. Rademacher \cite{Rademacher} proves that for a generic Riemannian metric on a compact manifold of dimension at least three, all prime closed geodesics are simple. L. Dever--D. Milićević \cite{DeverMilicevic} obtain effective refinements of geodesic growth on compact orientable hyperbolic $3$-manifolds.


We now explain the intuition behind the proof. If $X$ is a Gromov hyperbolic space, let $\partial^{(n)} X$ denote the space $ \{(x_1, \cdots, x_n): x_i \in \partial X, x_i\neq x_j \tn{ for }i\neq j\}. $ Suppose $M=\Gamma\backslash\hts$ is a closed hyperbolic $3$-manifold. Let $g\in \Gamma$ be a loxodromic element representing a closed geodesic in $M$. If $a_1, b_1$, $a_2, b_2$ are four distinct points in $\C$, we denote their cross ratio by $(a_1, a_2;b_1,b_2)\in \C$ using the convention of \cref{subsec:intersection-cross-ratio}. 
C. Maclachlan--Reid [\citenum{mrAritheoremeticHyperbolic3Manifolds}, Lemma 5.3.11] imply that for a nonsimple closed geodesic $g$, there are two lifts $A_g, A_g'$ of $g$ in $\hts$ with endpoints $a_1, b_1$, and $a_2, b_2$, respectively, so that $(A_g:A_g')\coloneqq (a_1, a_2;b_1,b_2)$ belongs to $(0,1)$. A random pair of geodesics $A_1$, $A_2$ is \emph{skew}. The complementary configuration is that $A_1, A_2$ lie on a common totally geodesic plane, whence $(A_1: A_2)\in \R$. However, the space of geodesics of $M$ (i.e. $\pi_1(M)\backslash\partial^{(2)} \pi_1(M)$) is non-Hausdorff. 

 Inspired by, for example, M. Gromov \cite{gmThreeRemarks}, we focus on the action of $\pi_1(M)$ on $\tcpartial \hts\cong \psltc$. Another way to detect a self-intersection is the existence of $h\in\Gamma\setminus\Stab_\Gamma(A_g)$ such that $ A_g\cap hA_g\ne\varnothing. $ 
 In matrix coordinates, $(A_g: hA_g)\in \R$ gives a nonzero real polynomial equation of degree $\leq 4$ on a lift of $g$ to $\operatorname{SL}_2(\C)$; see
\cref{prop:collision}. Thus possible self-intersections are encoded by
zero sets of polynomials in a fixed finite-dimensional space on
$\operatorname{SL}_2(\C)\subset\R^8$. 

For a general finite generating set of $\Gamma$, word counting need not
reflect closed-geodesic counting, due to e.g. the existence of relators and backtracking. 
Moreover, it is a consequence of L.  Bowen [\citenum{blCheegerConstantsL2}, Corollary 1.3 and (1)] and the relation between the bottom of the spectrum and the critical exponent that the growth rate (or critical exponent) of any free discrete group in $\isom(\mathbb{H}^{2n})$ for $n\geq 2$ is uniformly bounded away from $2n-1$, while we need to label closed geodesics using elements with nearly optimal growth. W. Yang [\citenum{ywStatisticallyConvexCocompactActions}, Theorem A] proves that for a nonelementary group $G$ admitting a proper action on a geodesic space with a contracting element, there exists a sequence of free \emph{semigroups} whose critical exponents tend to that of $G$. 
Thus, we use the bisector counting theorem of
A. Gorodnik--H. Oh [\citenum{GorodnikOh}, Theorem 1.6] to construct a finite alphabet $S$ generating a free semigroup with uniformly controlled orbit paths such that
$$
\frac{\log |S|}{U_S}>2-\varepsilon \tn{ where }
U_S=\max_{s\in S}d(o,so).
$$
Then the closed geodesics labelled by the resulting words have nearly maximal geometric growth.

If $g=s_1\cdots s_n$ and the projection of $A_g$ is not embedded, then
$A_g$ meets a translate $hA_g$. Using the quasi-geodesic control above
and the Morse lemma, we reduce such a self-intersection, after a cyclic
shift, to one of $O(n^2)$ collision configurations. In each
configuration, the translating element is determined by at most
$n/2$ letters, leaving a complementary block of length at least $n/2$.

We then use a coefficient-uniform polynomial escape estimate in the
fixed degree-$4$ space $\mathcal P$. The only algebraic subgroups that
arise are the stabilizers $H_W$ of polynomial subspaces
$W\leq\mathcal P$ with a nonempty common zero set. Their intersections
with the cocompact Kleinian group are either elementary or
plane-preserving, so the relevant stabilizer cosets contain only
$O(e^R)$ elements of displacement at most $R$. This reflects the
growth gap between the ambient lattice and its elementary or plane-preserving subgroups. Since the free semigroup has $|S|^n$ distinct words
of length $n$ and
$$
\log|S|>U_S,
$$
the probability of hitting such exceptional sets decays exponentially. 


\medskip
\noindent\textbf{Organization of the paper.}
\cref{sec:prelim} provides some geometric preliminaries. 
\cref{sec:free-semigroups} provides a free semigroup with
nearly maximal exponential growth and uniform control of the quasi-axes
of periodic words.
\cref{sec:collision-reduction} characterizes coplanarity of geodesics using a polynomial derived from the cross ratios. Moreover, the possible self-intersections of a word of length $n$ are reduced to $O(n^2)$
configurations in such a way that at least half of the letters remain
independent. This is the input for \cref{sec:escape}, where polynomial
escape shows that a random word has an embedded projected axis with
probability tending to one exponentially fast. The remaining issue is
multiplicity: in \cref{sec:completion} we pass from words to distinct
geometric images and obtain the required lower exponential growth
bound; the matching upper bound follows from the prime geodesic
theorem. \cref{sec:dense-nonsimple} proves that closed arithmetic hyperbolic manifolds of type I have dense nonsimple closed geodesics.
\tableofcontents

\section{Preliminaries}\label{sec:prelim}
Let $M=\Gamma\backslash\hts$ be a closed hyperbolic $3$-manifold, not necessarily orientable. We can conjugate $\Gamma$ so that no nontrivial element has $\infty$ as a fixed point. Let
$$
\pi_M:\hts\longrightarrow M
$$
be the covering map, and set
$$
\Gamma^+=\Gamma\cap\Isom^+(\hts).
$$
Then $\Gamma^+$ is a cocompact lattice in $\operatorname{PSL}_2(\C)$. Since $\Gamma$ is torsion-free and cocompact, every nontrivial element
of $\Gamma$ is axial; in particular, every nontrivial element of
$\Gamma^+$ is loxodromic. For loxodromic $g\in\Gamma^+$, write $A_g$ for its axis and $\ell(g)$ for its translation length. We write $\sys(M)$ for the length of a shortest closed geodesic in $M$.

\begin{lemma}[\citenum{mrAritheoremeticHyperbolic3Manifolds}, Lemma 5.3.10]\label{lem:axis-basics}
Let $A\subset\hts$ be a geodesic whose stabilizer in the discrete torsion-free group $\Gamma$ contains a nontrivial loxodromic element. Then
\begin{enumerate}[label=(\roman*)]
\item $\Stab_\Gamma(A)\cong\Z$, and every element of $\Stab_\Gamma(A)$ preserves the orientation of $A$ as a line.
\item If $g\in\Gamma^+$ is loxodromic, then $g$ is simple if and only if
$$
A_g\cap hA_g=\varnothing
$$
for every $h\in\Gamma\setminus\Stab_\Gamma(A_g)$.
\end{enumerate}
\end{lemma}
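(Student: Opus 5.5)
The lemma is cited from Maclachlan--Reid, but I would prove it directly from the structure of discrete torsion-free groups acting on $\hts$.

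\textbf{Part (i).} Let $g_0\in\Stab_\Gamma(A)$ be the given loxodromic element; its axis is $A$. Every $h\in\Stab_\Gamma(A)$ preserves the endpoint set $\{a,b\}$ of $A$.

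First I rule out elements swapping $a$ and $b$. Suppose $h(a)=b$ and $h(b)=a$. Then $h$ acts on the line $A$ as an orientation-reversing isometry of $\R$, i.e.\ a reflection, so it fixes a point $p\in A$. Then $h^2$ fixes $p$ and all of $A$ pointwise. Since $\Gamma$ is torsion-free and discrete, point stabilizers are finite and hence trivial. So $h=1$, which contradicts $h$ swapping the two points. (Equivalently, $h$ is elliptic, since it fixes $p$.)

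Hence every element of $\Stab_\Gamma(A)$ fixes both $a$ and $b$ and preserves the orientation of $A$. Restriction to $A$ gives a homomorphism $\tau:\Stab_\Gamma(A)\to\R$, recording the translation along $A$. It is injective: an element in the kernel fixes $A$ pointwise, so it is trivial by the torsion-free argument above. Its image is a discrete subgroup of $\R$, because $\Gamma$ acts properly discontinuously and only finitely many elements move a fixed point of $A$ by at most $1$. The image is nonzero, since $\tau(g_0)\neq 0$. A nonzero discrete subgroup of $\R$ is $\cong\Z$, so $\Stab_\Gamma(A)\cong\Z$.

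\textbf{Part (ii).} The closed geodesic of $g$ is $\pi_M(A_g)$, parametrized as $A_g/\langle g_0\rangle$ with $g_0$ a generator of the stabilizer.

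Suppose it has a transverse self-intersection. Then there are points $x\neq y$ in a fundamental segment of $A_g$ with $\pi_M(x)=\pi_M(y)$ and transverse tangent directions. So some $h\in\Gamma$ satisfies $hx=y$, giving $y\in A_g\cap hA_g$. If $h$ were in $\Stab_\Gamma(A_g)$, then $hA_g=A_g$ and the two branches through the image point would coincide rather than cross. Thus $h\notin\Stab$, as required.

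Conversely, suppose $h\notin\Stab_\Gamma(A_g)$ and $y\in A_g\cap hA_g$. The geodesics $A_g$ and $hA_g$ are distinct. Two distinct geodesics meet in at most one point, and there they meet transversally. Projecting to $M$, the branches of $\pi_M(A_g)$ through $\pi_M(y)$ coming from $A_g$ and from $hA_g=h\cdot A_g$ have distinct tangent lines. This is a transverse self-intersection, so the geodesic is nonsimple.

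\textbf{Main subtlety.} The delicate point is excluding a nontransverse ``double traversal.'' This is exactly where primitivity and part (i) enter. Any $h$ with $hA_g=A_g$ lies in the stabilizer, so two lifts through the same point with the same tangent line must coincide. Hence coincidences of lifts produce only periodicity, never an intersection.

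A further point arises in the non-orientable case. An orientation-reversing $h\in\Gamma$ acts on $\hts$ without fixed points and may still stabilize $A$. The argument in (i) already handles this, since it uses only the action on $A$, not orientability of $h$.
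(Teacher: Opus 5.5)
Your argument is correct. The paper does not prove this lemma; it quotes Maclachlan--Reid, Lemma 5.3.10. So your proof stands in for a citation rather than competing with an argument in the text.

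The point worth stressing is that your proof never uses orientation, which makes it more than a stylistic alternative to the citation. An endpoint-swapping element is excluded because it fixes a point of $A$. Note that $h$ itself fixes $p$, which is all you need; the detour through $h^2$ is unnecessary. The translation homomorphism $\tau$ is injective because point stabilizers of a discrete torsion-free group are trivial. Neither step cares whether the element is orientation-preserving.

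This matters because Maclachlan--Reid work with Kleinian groups, i.e. discrete subgroups of $\psltc$. The paper, however, applies the lemma to a possibly non-orientable $M$, whose $\Gamma$ may contain glide reflections stabilizing $A_g$. Your version covers that case directly. It also makes explicit that only discreteness and torsion-freeness are used, not cocompactness. The citation buys brevity.

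Two minor remarks:
\begin{itemize}
\item You do not need the claim that $A$ is the axis of $g_0$. The fact $\tau(g_0)\neq 0$ already follows from injectivity of $\tau$ and $g_0\neq e$. In fact (i) only needs $\Stab_\Gamma(A)$ to be nontrivial.
\item Your closing paragraph on non-transverse double traversals is not needed for (ii). The paper's notion of simplicity concerns only transverse self-intersections, and your two directions already settle that equivalence completely.
\end{itemize}
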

Denote the Gromov product by 
$$
(x\mid y)_z=\frac12\bigl(d(x,z)+d(y,z)-d(x,y)\bigr).
$$
Fix a hyperbolicity constant $\delta$ so that
\begin{equation}\label{eq:delta-convention}
(x\mid z)_w\ge \min\{(x\mid y)_w,(y\mid z)_w\}-\delta
\end{equation}
for all $w,x,y,z\in\hts$.

We identify $\operatorname{SL}_2(\C)$ with the real affine set
$$
G\coloneqq \{(a,b,c,d)\in\C^4:ad-bc=1\}\subset\R^8.
$$
Every polynomial below is a real polynomial in the real and imaginary parts of the four matrix entries, restricted to $G$.

We recall the definition of \emph{arithmetic manifolds of type I} from Han--R. Jiang [\citenum{hjAsymptoticalRigidityFundamentalGroups}, Section 2] for convenience. 
See also E. B. Vinberg--O. V.  Shvartsman [\citenum{vsDiscreteGroupsConstantCurvature}, Chapter 6] and D. Morris \cite{mdArithmeticGroups}. 
Let $K\subset \R$ be a totally real algebraic number field, and $R_K$ its ring of integers. A non-degenerate quadratic form
$$f(x)=\sum_{i,j=1}^{n+1}a_{ij}x_ix_j, \quad (a_{ij}=a_{ji}\in K)$$
is \emph{admissible} if its negative index is $1$, and for any non-identity embedding $\sigma\colon K \rightarrow \R$, the quadratic form
$$f^{\sigma}(x)=\sum_{i,j=1}^{n+1} a_{ij}^{\sigma}x_ix_j$$
is positive definite. Then the group $O'(f,R_K)$ of linear transformations with coefficients in $R_K$ preserving the form $f$ and mapping each connected component of the cone $C=\{x\in \R^{n+1}\mid f(x)<0\}$ onto itself is a discrete group of isometries of $\mathbb{H}^n$. 

If we consider linear transformations preserving an arbitrary lattice $L\subset K^{n+1}$, the resulting group is commensurable with $O'(f,R_K)$. Considering the lattice $L$ as a quadratic $R_K$-module, with scalar product defined by the form $f$, we denote this group by $O'(L)$. By Selberg's lemma, there exists a finite-index subgroup $\Gamma \leq O'(L)$ which is torsion-free. We call $\Gamma$ an \emph{arithmetic lattice \oton}, and $M=\Gamma\backslash \mathbb{H}^n$ an arithmetic manifold \oton.

For simplicity of notation, we suppress the dependence of the constants on $M$ from the notation.

\section{Free semigroups of nearly maximal growth}\label{sec:free-semigroups}
We start with a remark clarifying our construction. If $g \in \pi_1(M)$ corresponds to an orientation-reversing isometry, then $g^2$ is orientation-preserving, while $g$ and $g^2$ determine geometrically identical geodesics. Thus to establish \Cref{thm:main}, it suffices to construct sufficiently many geometrically distinct closed geodesics whose corresponding isometries are orientation-preserving. 

Set $G_0=\operatorname{PSL}_2(\C)$ and
$K=\operatorname{PSU}(2)$, and identify $G_0/K$ with $\hts$ so
that the coset $K$ corresponds to $o$. In the upper half-space model,
write
$$
A^+=\{a_t:t\ge0\},
\qquad
a_t(z)=e^t z.
$$
Let $M_0$ be the centralizer of $A^+$ in $K$, so that
$K/M_0\simeq\partial\hts$. For a loxodromic element $g\in G_0$, let
$\vartheta^+(g)$ be the endpoint of the geodesic ray from $o$ through
$go$, and set
$$
\vartheta^-(g)=\vartheta^+(g^{-1}).
$$
We now construct a free positive semigroup with nearly maximal geometric
growth. Related constructions appear in Bowen's \cite{BowenFreeGroups} work on free subgroups of lattices and Yang's \cite{ywStatisticallyConvexCocompactActions} work on the existence of free semigroups in a nonelementary group with a geometric action and a contracting element, whose critical exponents approach the maximal. 
\begin{proposition}\label{prop:free-semigroup}
For every $0<\varepsilon<1$ there exist a finite set
$S\subset\Gamma^+$, an open spherical disk
$D\subset\partial\hts$, and constants $\alpha,R>0$ such that,
with
$$
U_S\coloneqq \max_{s\in S}d(o,so),
$$
the following hold:
\begin{enumerate}[label=(\roman*)]
\item $s(\overline D)\subset D$ for every $s\in S$, and the sets
$s(\overline D)$ are pairwise disjoint;
\item the positive semigroup on $S$ is free;
\item for every bi-infinite sequence $(s_j)_{j\in\Z}$ with
$s_j\in S$, if $P_0=e$, $P_{j+1}=P_js_{j+1}$, and $x_j=P_jo$, then
\begin{equation}\label{eq:quasigeo}
    \alpha|p-q|
\le
d(x_p,x_q)
\le
U_S|p-q|
\qquad(p,q\in\Z);
\end{equation}
\item every periodic positive $S$-word corresponds to a loxodromic element of $\Gamma^+$ and has a periodic broken geodesic at Hausdorff distance at most $R$ from its axis;
\item
$$
\frac{\log|S|}{U_S}>2-\varepsilon.
$$
\end{enumerate}
\end{proposition}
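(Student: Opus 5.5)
The plan is a ping-pong construction driven by the Gorodnik--Oh bisector counting theorem. Fix $\varepsilon$, and fix two small disjoint closed spherical caps $B^+,B^-\subset\partial\hts$ such that the disk $D$ is a neighborhood of $B^+$ and is disjoint from a neighborhood of $B^-$. By [\citenum{GorodnikOh}, Theorem 1.6], applied to the lattice $\Gamma^+$ in $G_0$ with the $KA^+K$ decomposition, the number of $\gamma\in\Gamma^+$ with
$$
d(o,\gamma o)\le T,\qquad \vartheta^+(\gamma)\in B^+,\qquad \vartheta^-(\gamma)\in B^-
$$
is asymptotic to $c\,\nu(B^+)\nu(B^-)e^{2T}$ for some $c>0$, where $\nu$ is the $K$-invariant measure on the boundary. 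Restrict to the annulus $T-1<d(o,\gamma o)\le T$; this set $S_T$ still has cardinality $\asymp e^{2T}$. Then $U_{S_T}\le T$ and $\log|S_T|\ge 2T-O(1)$, so (v) holds once $T$ is large.

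For (i), use that an element $\gamma$ with $d(o,\gamma o)=t$ large acts on $\partial\hts$ by contracting the complement of a small neighborhood of $\vartheta^-(\gamma)$ into a ball of radius $O(e^{-t})$ around $\vartheta^+(\gamma)$. This follows from the Cartan decomposition $\gamma=k_1a_tk_2$ with $k_1,k_2\in K$. Hence $\gamma(\overline D)$ lies in a tiny ball about $\vartheta^+(\gamma)\in B^+$, so $\gamma(\overline D)\subset D$. The images are not automatically disjoint, so pass to a subset. Cover $B^+$ by $\asymp e^{2T}$ balls of radius $Ce^{-T}$. By the equidistribution in Gorodnik--Oh, applied to smaller caps or by a direct pigeonhole with bounded multiplicity, select a subset $S\subset S_T$ whose images $\gamma(\overline D)$ are pairwise disjoint. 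We need $|S|\ge e^{(2-\varepsilon/2)T}$.

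\textbf{Main obstacle.} The delicate point is this disjointness selection. Images of distinct $\gamma$ are balls of radius about $e^{-T}$ centered near $\vartheta^+(\gamma)$, and many lattice points may share nearly the same $\vartheta^+$. I would first try a greedy selection: keep $\gamma$ only if its image ball is $e^{-T}$-separated from those already kept. At most $O(1)$ balls of radius about $e^{-T}$ overlap a given one. The second-moment issue is controlled by applying Gorodnik--Oh's count to caps of radius $e^{-T}$. If that is unavailable, use discreteness: the orbit points $\gamma o$ are uniformly separated, and by the shadow lemma two elements with overlapping images must have $\gamma o,\gamma' o$ within bounded distance. Thus each selected element excludes only $O(1)$ others, and $|S|\gtrsim|S_T|$.

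With (i) in hand, (ii) is standard ping-pong. A positive word $s_1\cdots s_n$ maps $\overline D$ into $s_1(\overline D)$, and disjointness of the $s(\overline D)$ lets us read off $s_1$, then induct. For (iii), the upper bound is the triangle inequality. For the lower bound, use the nested sets: the point $P_p o$ lies in the hull over $P_p(D)$, and $P_q o$ for $q>p$ lies in nested shadows. More concretely, the angle condition gives $(s^{-1}o\mid s'o)_o\le C_0$ for all $s,s'\in S$, since $\vartheta^-(s)\in B^-$ and $\vartheta^+(s')\in B^+$ are uniformly separated. Taking $T\gg C_0+\delta$, the local-to-global principle for quasigeodesics in $\hts$ shows that the concatenation of the segments $[x_j,x_{j+1}]$ is a quasigeodesic. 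This yields $d(x_p,x_q)\ge\alpha|p-q|$ with $\alpha\approx T-2C_0-O(\delta)$. For (iv), a periodic word $w$ produces a $w$-invariant bi-infinite quasigeodesic by (iii). Hence $w$ is loxodromic, being nontrivial with positive translation, and nonelliptic since $\Gamma$ is torsion-free. By the Morse lemma the broken geodesic through the $x_j$ lies within $R=R(\alpha,U_S,\delta)$ of $A_w$. The constant $R$ is uniform since the parameters are fixed once $S$ is.
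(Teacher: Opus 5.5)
Your proposal is correct and follows essentially the same route as the paper: Gorodnik--Oh bisector counting in a thin annulus with two angular restrictions, Cartan-decomposition boundary contraction, a greedy selection making the images of $\overline D$ disjoint, ping-pong for freeness, a uniform Gromov-product bound at the vertices giving the quasi-geodesic, and the Morse lemma. For the disjointness step, the discreteness argument you fall back on is exactly the paper's: overlapping images force the $\vartheta^+$'s to lie within $O(e^{-T})$, the law of cosines then gives $d(go,ho)\le C_1$, and $g^{-1}h$ lies in a fixed finite set. The shrinking-cap version of Gorodnik--Oh you first mention is not needed, and their non-effective theorem does not supply it.
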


\begin{proof}
Choose open spherical disks $U,D,V\subset\partial\hts$ such that
$$
\overline U\subset D,
\qquad
\overline V\cap\overline D=\varnothing.
$$
In particular, $U$ and $V$ have positive spherical measure and
boundaries of measure zero. With $d_{\mathrm{sph}}$ denoting spherical
distance, set
\begin{equation}\label{eq:rho}
   \rho\coloneqq 
\min\bigl\{
d_{\mathrm{sph}}(\overline U,\partial D),
d_{\mathrm{sph}}(\overline D,\overline V)
\bigr\}>0. 
\end{equation}

Fix $w>0$. For $T>0$ define
$$
\mathcal C_T=
\left\{
g\in\Gamma^+:
-w\le d(o,go)-T< w,\ 
\vartheta^+(g)\in U,\ 
\vartheta^-(g)\in V
\right\}.
$$
If we writ $g=k_1a_tk_2$, the conditions
$\vartheta^+(g)\in U$ and $\vartheta^-(g)\in V$ are the two angular
restrictions in the bisector
$$
\Omega_UA^+M_0\Omega_V,
$$
where
$$
\Omega_U=\{k\in K:k\infty\in U\},
\qquad
\Omega_V=\{k\in K:k^{-1}0\in V\}.
$$
Moreover,
$$
\Omega_UM_0=\Omega_U,
\qquad
M_0\Omega_V=\Omega_V.
$$

The bisector theorem of Gorodnik--Oh
\cite[Theorem~1.6]{GorodnikOh}, together with the volume
asymptotic in hyperbolic $3$-space
$$
\operatorname{vol}\{g\in G_0:d(o,go)<Q\}
=
c_0e^{2Q}(1+o(1)),
$$
therefore gives, for some $c_{U,V}>0$,
$$
\#\left\{
g\in\Gamma^+:
d(o,go)<Q,\ 
\vartheta^+(g)\in U,\ 
\vartheta^-(g)\in V
\right\}
=
c_{U,V}e^{2Q}(1+o(1))
$$
as $Q\to\infty$. Applying this asymptotic at $Q=T+w$ and $Q=T-w$
gives
$$
\begin{aligned}
|\mathcal C_T|
&=
c_{U,V}
\left(e^{2(T+w)}-e^{2(T-w)}\right)
+o(e^{2T})\\
&=
c_{U,V}(e^{2w}-e^{-2w})e^{2T}
+o(e^{2T}).
\end{aligned}
$$
Since $c_{U,V}>0$ and $w>0$,
\begin{equation}\label{eq:shell-size}
|\mathcal C_T|\asymp e^{2T}.
\end{equation}

We next establish a uniform boundary contraction estimate for elements
of $\mathcal C_T$. Write
$$
g=k_1a_tk_2\in\mathcal C_T,
$$
so that
$$
t=d(o,go),
\qquad
|t-T|\le w.
$$
Since
$$
k_2^{-1}0=\vartheta^-(g)\in V
$$
and $k_2$ acts by a spherical isometry, for every $x\in\overline D$, by \eqref{eq:rho},
$$
d_{\mathrm{sph}}(k_2x,0)
=
d_{\mathrm{sph}}(x,k_2^{-1}0)
\ge\rho.
$$
In the standard coordinate
$\partial\hts=\widehat{\C}$,
$$
d_{\mathrm{sph}}(z,0)=2\arctan|z|,
\qquad
d_{\mathrm{sph}}(z,\infty)=2\arctan\frac1{|z|}.
$$
Hence
$$
|k_2x|\ge\tan(\rho/2).
$$
Since $a_tz=e^tz$ and $\arctan |z|\leq|z|$,
$$
\begin{aligned}
d_{\mathrm{sph}}(a_tk_2x,\infty)
&=
2\arctan\frac{1}{e^t|k_2x|}\\
&\le
2\cot(\rho/2)e^{-t}.
\end{aligned}
$$
Finally, $k_1$ is a spherical isometry and
$
k_1\infty=\vartheta^+(g)$. Hence
$$
d_{\mathrm{sph}}(k_1a_tk_2x,k_1\infty)
=d_{\mathrm{sph}}(gx,\vartheta^+(g))\le
2\cot(\rho/2)e^{-t}.
$$
Since $|t-T|\le w$, there is a constant $C_0$, independent of $T$ and
of $g\in\mathcal C_T$, such that
\begin{equation}\label{eq:boundary-contraction}
g(\overline D)
\subset
B_{\mathrm{sph}}
\bigl(\vartheta^+(g),C_0e^{-T}\bigr).
\end{equation}
For all sufficiently large $T$, since
$\vartheta^+(g)\in U$ and
$d_{\mathrm{sph}}(\overline U,\partial D)\ge\rho$, 
$$B_{\mathrm{sph}}
\bigl(\vartheta^+(g),C_0e^{-T}\bigr)\subset D.$$

Call two elements $g,h\in\mathcal C_T$ \emph{incompatible} if
$$
g(\overline D)\cap h(\overline D)\ne\varnothing.
$$
If $g$ and $h$ are incompatible, then the triangle inequality and \eqref{eq:boundary-contraction} imply
$$
d_{\mathrm{sph}}
\bigl(\vartheta^+(g),\vartheta^+(h)\bigr)
\le
2C_0e^{-T}.
$$
Put
$$
\theta\coloneqq 
d_{\mathrm{sph}}
\bigl(\vartheta^+(g),\vartheta^+(h)\bigr),
\qquad
r=d(o,go),
\qquad
s=d(o,ho).
$$
Then
$$
\theta=O(e^{-T}),
\qquad
r,s=T+O(1).
$$
The hyperbolic law of cosines (see e.g. P. Buser~\cite[Theorem 2.2.1]{buser2010geometry}) gives
$$
\cosh d(go,ho)
=
\cosh(r-s)
+
\sinh r\,\sinh s\,(1-\cos\theta).
$$
Here
$$
\cosh(r-s)=O(1),
\qquad
\sinh r\,\sinh s=O(e^{2T}),
\qquad
1-\cos\theta=O(e^{-2T}).
$$
Hence
$$
d(go,ho)\le C_1
$$
for some $C_1$ independent of $T$. Equivalently,
$$
g^{-1}h\in
E\coloneqq 
\{\eta\in\Gamma^+:d(o,\eta o)\le C_1\}.
$$
The set $E$ is finite because $\Gamma^+$ is discrete. The preceding argument shows that, for every
$g\in\mathcal C_T$, there are at most $|E|$ elements
$h\in\mathcal C_T$ incompatible with $g$. Choose a maximal set
$$
S_T\subset\mathcal C_T
$$
such that no two distinct elements of \(S_T\) are incompatible. Hence
$$
|S_T|
\ge
\frac{|\mathcal C_T|}{|E|+1}.
$$
Together with \eqref{eq:shell-size}, this gives
\begin{equation}\label{eq:alphabet-size}
|S_T|\asymp e^{2T}.
\end{equation}
By construction, the sets $s(\overline D)$, $s\in S_T$, are contained
in $D$ and pairwise disjoint.

The positive semigroup generated by $S_T$ is free. Suppose that two
nonempty positive words
$$
\omega=s_1s_2\cdots s_m,
\qquad
\omega'=t_1t_2\cdots t_n
$$
represent the same element of $\Gamma^+$. Then
$$
\omega(D)=\omega'(D).
$$
By construction, the image $\omega(D)$ is contained in the first-level
cylinder
$$
s_1(D),
$$
while $\omega'(D)$ is contained in
$$
t_1(D).
$$
Since the sets
$$
\{s(\overline D):s\in S_T\}
$$
are pairwise disjoint, we must have
$$
s_1=t_1.
$$
Since the action of $s_1$ is invertible, this implies
$$
s_1^{-1} \omega= s_1^{-1}\omega'.
$$
Repeating the same argument finitely many times shows that the two
words have the same length and the same letters. 
It remains to exclude the identity element. If $\omega$ is a nonempty
positive word, then by the first part of the construction,
$$
\omega(\overline D)\subset s_1(\overline D)\subset D
$$
for its first letter $s_1$. Since the identity maps $\overline D$ onto $\overline D$, $\omega$ cannot be the identity.

Uniformly in $s\in S_T$,
\begin{equation}\label{eq:letter-scale}
d(o,so)=T+O(1),
\qquad
\vartheta^+(s)\in U,
\qquad
\vartheta^-(s)\in V.
\end{equation}
Since $\overline U\subset D$ and
$\overline V\cap\overline D=\varnothing$, the endpoints
$\vartheta^-(s)$ and $\vartheta^+(t)$ are separated by at least $\rho$,
uniformly in $s,t\in S_T$. With our identification of
$\partial\hts$ with the visual sphere at $o$, the angle $\theta$ at $o$
between $[o,s^{-1}o]$ and $[o,to]$ is therefore at least $\rho$.

By \eqref{eq:letter-scale},
$$
d(o,s^{-1}o),\ d(o,to)=T+O(1).
$$
By the hyperbolic law of cosines, we have
$$
\cosh d(s^{-1}o,to)
=
\cosh\bigl(d(o,s^{-1}o)-d(o,to)\bigr)
+
\sinh d(o,s^{-1}o)\,
\sinh d(o,to)\,(1-\cos\theta).
$$ 

Let
$$
a=d(o,s^{-1}o),\qquad b=d(o,to),\qquad \kappa=1-\cos\rho>0.
$$
Since $\theta\ge\rho$, we have
$$
1-\cos\theta\ge\kappa.
$$

The hyperbolic law of cosines gives
$$
\cosh d(s^{-1}o,to)
=
\cosh(a-b)+\sinh a\,\sinh b\,(1-\cos\theta).
$$
Hence,
$$
\cosh d(s^{-1}o,to)
\ge
\cosh(a-b)+\kappa\sinh a\,\sinh b.
$$

Since $0<\kappa\le2$ and
$$
\cosh(a-b)+\kappa\sinh a\,\sinh b
=
\frac{e^{a+b}}{4}
\Bigl[
\kappa+(2-\kappa)(e^{-2a}+e^{-2b})
+\kappa e^{-2(a+b)}
\Bigr],
$$the expression in brackets is bounded below by $\kappa$. Hence, we have
$$
\cosh d(s^{-1}o,to)
\ge
\frac{\kappa}{4}e^{a+b}.
$$

On the other hand, for $x\ge0$, $
\cosh x\le e^x.$
Applying this to $x=d(s^{-1}o,to)$ gives
$$
e^{d(s^{-1}o,to)}
\ge
\frac{\kappa}{4}e^{a+b}.
$$
Therefore, we obtain
$$
d(s^{-1}o,to)
\ge
a+b+\ln\frac{\kappa}{4}
=
a+b-\ln\frac{4}{\kappa}.
$$
Let
$$
C=\ln\frac{4}{\kappa}
=
\ln\frac{4}{1-\cos\rho}.
$$
Then
$$
d(s^{-1}o,to)
\ge
d(o,s^{-1}o)+d(o,to)-C.
$$
Here $C$ depends only on $\rho$, and is independent of $T,s,t$. Set $C_2=C/2$. For sufficiently large $T$, we have
$$
(s^{-1}o\mid to)_o
\le C_2,
\qquad
s,t\in S_T.
$$

Let
$$
L_T\coloneqq \min_{s\in S_T}d(o,so),
\qquad
\alpha_T\coloneqq L_T-2C_2-2\delta.
$$
Since $L_T=T+O(1)$, we have $\alpha_T>0$ for all sufficiently large
$T$.

Consider a bi-infinite sequence $(s_j)_{j\in\Z}$ with
$s_j\in S_T$, and let $(x_j)$ be the corresponding orbit from the
statement of the proposition. At every vertex, isometric invariance of
the Gromov product gives
$$
(x_{r-1}\mid x_{r+1})_{x_r}
=
(s_r^{-1}o\mid s_{r+1}o)_o
\le C_2.
$$
Fix $p\in\Z$. We first show, by induction on $r-p$, that
$$
(x_p\mid x_{r+1})_{x_r}
\le C_2+\delta
\qquad(r>p).
$$
For $r=p+1$ this is the preceding estimate. Assuming the bound at the
previous step, we obtain from the Gromov-product identity gives
$$
\begin{aligned}
(x_p\mid x_{r-1})_{x_r}
&=
d(x_{r-1},x_r)
-
(x_p\mid x_r)_{x_{r-1}}\\
&\ge
L_T-C_2-\delta\\
&>
C_2+\delta.
\end{aligned}
$$
On the other hand,
$$
(x_{r-1}\mid x_{r+1})_{x_r}\le C_2.
$$
Applying \eqref{eq:delta-convention} to
$x_{r-1},x_p,x_{r+1}$ with basepoint $x_r$ gives
$$
(x_{r-1}\mid x_{r+1})_{x_r}
\ge
\min\left\{
(x_{r-1}\mid x_p)_{x_r},
(x_p\mid x_{r+1})_{x_r}
\right\}
-\delta.
$$
Since the first term in the minimum is greater than $C_2+\delta$, the
upper bound on the left-hand side forces
$$
(x_p\mid x_{r+1})_{x_r}
\le C_2+\delta.
$$

Consequently,
$$
\begin{aligned}
d(x_p,x_{r+1})
&=
d(x_p,x_r)
+
d(x_r,x_{r+1})
-
2(x_p\mid x_{r+1})_{x_r}\\
&\ge
d(x_p,x_r)
+
L_T-2C_2-2\delta\\
&=
d(x_p,x_r)+\alpha_T.
\end{aligned}
$$
Together with
$$
d(x_p,x_{p+1})\ge L_T\ge\alpha_T,
$$
iteration gives
$$
d(x_p,x_q)\ge\alpha_T|p-q|.
$$
The reverse inequality follows immediately from the triangle inequality:
$$
d(x_p,x_q)\le U_{S_T}|p-q|.
$$
This proves (iii) and \eqref{eq:quasigeo}.

Let $\omega=(s_1,\ldots,s_m)$ be a nonempty positive $S_T$-word and set
$$
g=s_1\cdots s_m.
$$
Extend the letters periodically to a bi-infinite sequence. Then
$$
x_{jm}=g^jo
\qquad(j\in\Z),
$$
and hence
$$
d(o,g^jo)
=
d(x_0,x_{jm})
\ge
\alpha_Tm|j|.
$$
It follows from the triangle inequality that the stable translation length is independent of basepoints and we have 
$$
\ell_{\mathrm{st}}(g)
\coloneqq 
\lim_{j\to\infty}
\frac{d(o,g^jo)}{j}
\ge
\alpha_Tm>0.
$$
Thus $g$ corresponds to a loxodromic element. 

Join consecutive vertices $x_j$ by geodesic segments and parametrize the
resulting bi-infinite broken geodesic $\gamma\colon \R\to\hts$
by arclength. By \eqref{eq:quasigeo}, every edge has length between
$\alpha_T$ and $U_{S_T}$; hence, for any $s<t$, choosing vertices $x_p,x_q$
within distance at most $U_{S_T}$ of $\gamma(s),\gamma(t)$ respectively gives
$$
d(\gamma(s),\gamma(t))
\ge
d(x_p,x_q)-2U_{S_T}
\ge
\frac{\alpha_T}{U_{S_T}}(t-s)-2\alpha_T-2U_{S_T},
$$
while $d(\gamma(s),\gamma(t))\le t-s$ because $\gamma$ is parametrized by
arclength. Thus $\gamma$ is a
$\bigl(U_{S_T}/\alpha_T,\,2(\alpha_T+U_{S_T})\bigr)$-quasi-geodesic. By the Morse lemma (see e.g. M. Bridson--A. Haefliger~\cite[Chapter III.H. Theorem 1.7]{bhMetricSpaceNonpositive}) there is $R_T<\infty$,
independent of the periodic word, such that this broken geodesic lies at
Hausdorff distance at most $R_T$ from the unique geodesic with the same two endpoints. Since the broken geodesic is $g$-periodic, its endpoints are fixed by $g$; as $g$ is loxodromic, the geodesic
joining them is $A_g$. This proves (iv).

Finally, \eqref{eq:alphabet-size} and \eqref{eq:letter-scale} give
$$
\log|S_T|=2T+O(1),
\qquad
U_{S_T}=T+O(1).
$$
Thus
$$
\frac{\log|S_T|}{U_{S_T}}\longrightarrow2.
$$
Choose $T$ sufficiently large that this ratio exceeds
$2-\varepsilon$, and set
$$
S=S_T,
\qquad
\alpha=\alpha_T,
\qquad
R=R_T.
$$
\end{proof}
\section{Polynomial reduction of self-intersections}
\label{sec:collision-reduction}
\subsection{Intersection of axes via cross ratio}
\label{subsec:intersection-cross-ratio}

We encode the intersection of two geodesic axes by a polynomial
condition coming from cross ratios. This subsection is inspired by
[\citenum{mrAritheoremeticHyperbolic3Manifolds}, Lemma~5.3.11].

\begin{lemma}\label{prop:collision}
For every $h\in\Gamma\setminus\{e\}$ there is a nonzero real polynomial
$P_h$ on
$$
G=\operatorname{SL}_2(\C)\subset\R^8
$$
of degree at most $4$ such that for every loxodromic
$g\in\Gamma^+$ and every lift
$\widetilde g\in\operatorname{SL}_2(\C)$ of $g$,
$$
A_g\cap hA_g\ne\varnothing
\quad\Longrightarrow\quad
P_h(\widetilde g)=0.
$$
The polynomial is invariant if any chosen matrix lift is multiplied by
$-I$.
\end{lemma}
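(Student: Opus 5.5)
The plan is to encode each axis by a trace-free matrix that depends linearly on the lift, and to express "the two axes meet" as the reality of their complex distance. Given a lift $\widetilde g\in\operatorname{SL}_2(\C)$ of a loxodromic $g$, put
$$
X_g\coloneqq \widetilde g-\widetilde g^{-1}.
$$
Since $\widetilde g^{-1}=\operatorname{adj}(\widetilde g)$, the entries of $X_g$ are real-linear in the entries of $\widetilde g$. Moreover $X_g$ is trace-free, $\det X_g=4-(\operatorname{tr}\widetilde g)^2\neq0$ (as $g$ is loxodromic), and $X_g$ has the same fixed points on $\widehat{\C}$ as $g$. So $X_g$, up to a complex scalar, is the standard ``line matrix'' of the axis $A_g$.

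Next I will produce the line matrix $Y$ of $hA_g$, splitting into two cases. If $h\in\Gamma^+$ with lift $\widetilde h$, take
$$
Y=\widetilde h X_g\widetilde h^{-1},
$$
so $\det Y=\det X_g$. If $h$ reverses orientation, write $h(z)=(p\bar z+q)/(r\bar z+s)$ with $\widetilde h=\begin{pmatrix}p&q\\r&s\end{pmatrix}\in\operatorname{SL}_2(\C)$, and take
$$
Y=\widetilde h\,\overline{X_g}\,\widetilde h^{-1},
$$
so $\det Y=\overline{\det X_g}$. In both cases the entries of $Y$ are real-linear in $\widetilde g$. Here I would invoke the classical formula for the complex distance $\delta$ between two geodesics with line matrices $X,Y$:
$$
\cosh\delta=\pm\frac{\operatorname{tr}(XY)}{2\sqrt{\det X\det Y}}.
$$
If the two geodesics intersect, then $\delta=i\theta$, so $\cosh\delta=\cos\theta\in\R$. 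In the orientation-preserving case, $\sqrt{\det X_g\det Y}=\pm\det X_g$, so intersection forces $\operatorname{tr}(X_gY)/\det X_g\in\R$, that is,
$$
P_h\coloneqq \operatorname{Im}\bigl(\operatorname{tr}(X_gY)\,\overline{\det X_g}\bigr)=0.
$$
This has degree $2+2=4$ in the real coordinates of $\widetilde g$. In the reversing case, $\det X_g\det Y=|\det X_g|^2>0$, so the condition is $P_h\coloneqq\operatorname{Im}\operatorname{tr}(X_gY)=0$, which has degree $2$.

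Invariance under $\widetilde g\mapsto-\widetilde g$ is immediate: $X_g$ and $Y$ both change sign, so $\operatorname{tr}(X_gY)$ and $\det X_g$ are unchanged. Replacing $\widetilde h$ by $-\widetilde h$ changes nothing either, so $P_h$ depends only on $h$.

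The main obstacle is showing that $P_h\not\equiv0$ on $G$. Because $G$ is irreducible of real dimension $6$, it suffices to find one $\widetilde g\in G$ with $P_h(\widetilde g)\neq0$, and $\widetilde g$ need not lie in $\Gamma$. First I would choose a geodesic $A$ such that $A$ and $hA$ are skew and have non-real complex distance; this is possible because $h\neq e$ and a generic geodesic is moved to a non-coplanar one. Next I would take $\widetilde g$ loxodromic with axis $A$ and multiplier chosen so that $\det X_g$ is generic, and check that the resulting quantity has nonzero imaginary part. In the orientation-preserving case there is an additional subtlety: $\operatorname{tr}(X_gY)/\det X_g$ might be purely imaginary rather than non-real. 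To avoid this I would perturb the complex multiplier of $\widetilde g$, which rotates $\overline{\det X_g}$ while leaving the axis data essentially fixed. If that perturbation does not work cleanly, the fallback is an explicit normal form computation: take $\widetilde g$ diagonal, $h$ generic, and expand $P_h$ to lowest order.
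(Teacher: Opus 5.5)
Your construction of $P_h$ is correct, and it is essentially the paper's polynomial written with line matrices instead of cross ratios. Indeed $\det X_g=-\Delta(\widetilde g)$ and $\operatorname{tr}(X_gY)=2(1-2c)\det X_g$ with $c=(z_1,h(z_1);z_2,h(z_2))$, so in the orientation-preserving case your $P_h$ is a constant multiple of $\Phi_h$. Your packaging is cleaner: there is no separate $r=0$ case, the degree count and $\pm I$ invariance are immediate, and the reversing case is handled by $\overline{X_g}$ without first normalizing $h$.

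The gap is in the step you flag as the main obstacle, nonvanishing. Your justification that $A$ and $hA$ can be taken skew ``because $h\neq e$'' is false for orientation-reversing isometries. If $h$ is a reflection in a plane, every geodesic and its mirror image are coplanar. Your own formula shows this: for $h(z)=\overline z$ you get $Y=\overline{X_g}$, and with $X_g=\begin{pmatrix}\alpha&\beta\\ \gamma&-\alpha\end{pmatrix}$ one has $\operatorname{tr}(X_g\overline{X_g})=2|\alpha|^2+2\operatorname{Re}(\beta\overline\gamma)\in\R$, so $P_h\equiv0$.

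The missing ingredient is torsion-freeness of $\Gamma$. An orientation-reversing $h\in\Gamma$ is not a reflection, so, as in the paper, $h=k^{-1}h_0k$ with $h_0(z)=\rho\overline z$, $\rho>0$, $\rho\neq1$. Your $P_h$ is compatible with this conjugation: $P_h(\widetilde g)=P_{h_0}(\widetilde k\widetilde g\widetilde k^{-1})$, since $\widetilde h_0=\pm\widetilde k\widetilde h\,\overline{\widetilde k}^{-1}$ and $Y$ is simply conjugated by $\widetilde k$. With $\widetilde h_0=\operatorname{diag}(\sqrt\rho,1/\sqrt\rho)$ one finds $\operatorname{Im}\operatorname{tr}(XY)=(\rho-\rho^{-1})\operatorname{Im}(\overline\beta\gamma)$. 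So your $P_h$ equals $4(\rho-\rho^{-1})$ times the paper's $\Psi_h$. It is nonzero (evaluate $P_{h_0}$ at $\begin{pmatrix}1&1\\ i&1+i\end{pmatrix}$) precisely because $\rho\neq1$.

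For orientation-preserving $h$ your claim is true but still unproved, and a normal form closes it. Conjugate $h$ to $z\mapsto\lambda z$ and put $m=\lambda+\lambda^{-1}$ and $t=\beta\gamma/\alpha^2$, which takes every value in $\C\setminus\{-1\}$. Then $\operatorname{tr}(XY)/\det X=-(2+mt)/(1+t)$. This is a nonconstant M\"obius function of $t$ unless $m=2$, i.e.\ $\lambda=1$, so it takes non-real values.

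Finally, your worry that $\operatorname{tr}(X_gY)/\det X_g$ might be ``purely imaginary rather than non-real'', and the proposed perturbation of the multiplier, are vacuous. A nonzero purely imaginary number is non-real. The ratio is unchanged under $X_g\mapsto\zeta X_g$, so it depends only on the axis. Moreover $P_h(\widetilde g)=|\det X_g|^2\operatorname{Im}\bigl(\operatorname{tr}(X_gY)/\det X_g\bigr)$ is nonzero exactly when $A_g$ and $hA_g$ are skew. Irreducibility of $G$ also plays no role: one point with $P_h\neq0$ suffices by definition.
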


\begin{proof}
We use the cross ratio
$$
(x_1,x_2;x_3,x_4)
=
\frac{(x_3-x_2)(x_4-x_1)}
     {(x_3-x_1)(x_4-x_2)},
$$
with the usual interpretation when one of the points is $\infty$.
Let $z_1,z_2$ be the fixed points of $g$. After taking reciprocals to match our convention, we obtain from the cross-ratio criterion
used in the proof of
[\citenum{mrAritheoremeticHyperbolic3Manifolds}, Lemma~5.3.11],
, gives
\begin{align*}
hA_g\neq A_g0\text{ and } A_g\cap hA_g\ne\varnothing
&\Longleftrightarrow
(z_1,h(z_1);z_2,h(z_2))\in(0,1),\\
A_g\text{ and }hA_g\text{ are asymptotic}
&\Longleftrightarrow
(z_1,h(z_1);z_2,h(z_2))\in\{0,1\},\\
A_g\text{ and }hA_g\text{ are disjoint and coplanar}
&\Longleftrightarrow
(z_1,h(z_1);z_2,h(z_2))
\in(-\infty,0)\cup(1, \infty).
\end{align*}
Thus $A_g$ and $hA_g$ are coplanar if and only if
$$
(z_1,h(z_1);z_2,h(z_2))\in\R;
$$
otherwise they are skew. The same conclusion holds when the two axes
coincide, in which case the cross ratio is $0$ or $1$.

Suppose first that $h$ preserves orientation. Choose lifts
$$
\widetilde h=
\begin{pmatrix}
a&b\\
c&d
\end{pmatrix},
\qquad
\widetilde g=
\begin{pmatrix}
p&q\\
r&s
\end{pmatrix}
\in\operatorname{SL}_2(\C),
$$
and put $u=p-s$. Since $z_1,z_2$ are the fixed points of $g$,
\begin{equation}\label{eq:xy,x+y}
r(z_1+z_2)=u,
\qquad
rz_1z_2=-q.
\end{equation}
Set
$$
\Delta(\widetilde g)=(p+s)^2-4
$$
and
$$
D_h(\widetilde g)
=
-c^2q^2-(cq+br)(a-d)u+ad\,u^2-b^2r^2
 +(2+a^2+d^2)qr.
$$
A direct calculation using \eqref{eq:xy,x+y} gives
\begin{equation}\label{eq: ratio}
(z_1,h(z_1);z_2,h(z_2))
=
\frac{D_h(\widetilde g)}{\Delta(\widetilde g)}.
\end{equation}
The case $r=0$ follows from the usual interpretation at $\infty$.
Since $g$ is loxodromic, $\Delta(\widetilde g)\ne0$. Hence coplanarity
implies
$$
\Phi_h(\widetilde g)
\coloneqq
\operatorname{Im}
\left(
D_h(\widetilde g)\overline{\Delta(\widetilde g)}
\right)
=0.
$$
Thus $\Phi_h$ is a real polynomial of degree at most $4$.

Suppose $\Phi_h\equiv0$. Then $A_g$ and $hA_g$ are coplanar for every
loxodromic $g$. Since this property is invariant under conjugation and
$h$ is loxodromic, we may replace $h$ by
$$
h_0(z)=\lambda z,
\qquad
|\lambda|\ne1.
$$
Using the lift
$$
\widetilde h_0
=
\operatorname{diag}(\sqrt{\lambda},\lambda^{-1/2}),
$$
and putting
$$
v=qr,
\qquad
\xi=2+\lambda+\lambda^{-1},
$$
we obtain
$$
\Delta(\widetilde g)=u^2+4v,
\qquad
D_{h_0}(\widetilde g)=u^2+\xi v.
$$
Evaluating $\Phi_{h_0}$ at the two loxodromic matrices
$$
\begin{pmatrix}
2&1\\
1&1
\end{pmatrix},
\qquad
\begin{pmatrix}
1+i&1\\
i&1
\end{pmatrix}
$$
gives successively
$$
\operatorname{Im}\xi=0,
\qquad
\xi=4.
$$
The latter implies $(\lambda-1)^2=0$, contrary to $|\lambda|\ne1$.
Thus $\Phi_h\not\equiv0$.

Suppose now that $h$ reverses orientation. Since $\Gamma$ is
torsion-free and cocompact, $h^2$ is nontrivial and loxodromic.
The map $h$ fixes the two endpoints of $A_{h^2}$ individually;
otherwise, after sending them to $0$ and $\infty$, it would have the
form
$$
z\longmapsto\frac{\alpha}{\overline z},
$$
whose square fixes $0$ and has derivative of modulus one there. After sending the fixed
points to $0$ and $\infty$ and rotating the coordinate, we may
therefore assume
$$
h_0(z)=\rho\overline z,
\qquad
\rho>0,\quad \rho\ne1.
$$

Write $h_0=khk^{-1}$ and choose a lift
$\widetilde k\in\operatorname{SL}_2(\C)$ of $k$. Since
conjugation sends $(A_g,hA_g)$ to
$$
(A_{kgk^{-1}},h_0A_{kgk^{-1}}),
$$
it preserves intersection and coplanarity. Write
$$
\widetilde k\widetilde g\widetilde k^{-1}
=
\begin{pmatrix}
p'&q'\\
r'&s'
\end{pmatrix}.
$$
If $w_1,w_2$ are its fixed points and $r'\ne0$, then
$w_1w_2=-q'/r'$, and a direct calculation gives
$$
\operatorname{Im}
(w_1,h_0(w_1);w_2,h_0(w_2))
=
\frac{\rho-\rho^{-1}}
{|r'|^2|w_2-w_1|^2}
\operatorname{Im}(\overline{q'}r').
$$
Hence coplanarity implies
$$
\Psi_h(\widetilde g)
\coloneqq
\operatorname{Im}(\overline{q'}r')
=0;
$$
for $r'=0$ this equality is automatic. Since $q'$ and $r'$ depend
complex-linearly on the entries of $\widetilde g$, $\Psi_h$ is a real
polynomial of degree at most $2$.

Since conjugation by $\widetilde k$ is a bijection of $G$ and
$$
\operatorname{Im}(\overline q\,r)=1
\qquad\text{for}\qquad
\begin{pmatrix}
1&1\\
i&1+i
\end{pmatrix}\in G,
$$
we have $\Psi_h\not\equiv0$.

Finally, define
\begin{equation}\label{eq: collision polynomial}
P_h(\widetilde g)
=
\begin{cases}
\Phi_h(\widetilde g),
& h\text{ preserves orientation},\\
\Psi_h(\widetilde g),
& h\text{ reverses orientation}.
\end{cases}
\end{equation}
Both branches are nonzero real polynomials of degree at most $4$, and
the preceding argument gives
$$
A_g\cap hA_g\ne\varnothing
\quad\Longrightarrow\quad
P_h(\widetilde g)=0.
$$
Moreover, both $\Phi_h$ and $\Psi_h$ are invariant under
$\widetilde g\mapsto-\widetilde g$. Replacing $\widetilde h$ or
$\widetilde k$ by its negative does not change the corresponding
formula. Hence $P_h$ is independent of all choices of matrix lifts.
\end{proof}
\subsection{Word counting and self-intersections}
Let $g\in\Gamma^+$ be loxodromic. Suppose that
$h\in\Gamma\setminus\Stab_\Gamma(A_g)$ satisfies
\[
A_g\cap hA_g=z\in\hts.
\] If $g=s_1s_2\cdots s_n$ is a word of length $n$, by studying the local geometric configuration around $z$, we will show that $h$ belongs to one of the $O(n^2)$ possible configurations. 
In particular, after a suitable cyclic shift of $(s_1, \cdots s_n)$,
$h$ is determined by a block of
length at most $n/2$ together with an element of a fixed finite set.
Thus, after conditioning on this short block, a complementary block of
at least $n/2$ independent letters remains.

Fix $S$ as in \cref{prop:free-semigroup}. For a word
$$
\omega=(s_1,\ldots,s_n)\in S^n,
$$
write
$$
g=[\omega]=s_1\cdots s_n\in\Gamma^+,
$$
and use the same notation for subwords; the empty word represents the
identity. 
Extend the letters periodically to $j\in\Z$ and define
$$
P_0=e,
\qquad
P_{j+1}=P_js_{j+1}.
$$
Then
$$
P_{j+n}=gP_j.
$$

Let $R$ be the constant in \cref{prop:free-semigroup} and put
$$
R_0=R+U_S,
\qquad
\Sigma=
\{\sigma\in\Gamma:d(o,\sigma o)\le2R_0\}.
$$
The set $\Sigma$ is finite since $\Gamma$ is discrete. Replacing
$\Sigma$ by $\Sigma\cup\Sigma^{-1}$ if necessary, we may assume that
$\Sigma$ is symmetric.

For $0\le i<n$, let $\omega^{(i)}\in S^n$ be the cyclic shift beginning
after the $i$th letter, and set
$$
Q_0=e,
\qquad
Q_i=s_1\cdots s_i
\quad(1\le i<n).
$$
Then
$$
[\omega^{(i)}]
=
s_{i+1}\cdots s_ns_1\cdots s_i
=
Q_i^{-1}gQ_i.
$$

\begin{proposition}\label{prop:finite}
If $A_g$ projects to a nonsimple geodesic in $M$, then there are
$$
i\in\{0,\ldots,n-1\},
\qquad
0\le k\le n/2,
\qquad
\sigma\in\Sigma,
$$
and a decomposition
$$
\omega^{(i)}=BC,
\qquad
|B|=k,
\qquad
|C|=n-k,
$$
such that, with $b=[B]$ and $c=[C]$,
$$
A_{bc}\cap(\sigma b^{-1})A_{bc}\ne\varnothing
\qquad\text{and}\qquad
\sigma b^{-1}\notin\Stab_\Gamma(A_{bc}).
$$
Consequently, only $O(n^2)$ triples $(i,k,\sigma)$ need be considered,
with the implied constant depending on $S$.
\end{proposition}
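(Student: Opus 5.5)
The idea is to locate the intersection point and its $h^{-1}$-preimage near vertices of the periodic broken geodesic of \cref{prop:free-semigroup}(iv). This writes $h$ as a short word times an element of $\Sigma$, up to powers of $g$. A cyclic conjugation then moves the first vertex to the identity.

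\emph{Step 1: nearby vertices.} By \cref{lem:axis-basics}(ii), nonsimplicity gives $h\in\Gamma\setminus\Stab_\Gamma(A_g)$ and a point $z\in A_g\cap hA_g$. Set $z'=h^{-1}z$, which lies on $A_g$. By \cref{prop:free-semigroup}(iv), every point of $A_g$ lies within $R$ of the broken geodesic through the orbit points $P_jo$. Every point of that broken geodesic lies within $U_S$ of a vertex, since each edge has length at most $U_S$ by \eqref{eq:quasigeo}. Hence there are indices $a,b\in\Z$ with
\[
d(z,P_ao)\le R_0,\qquad d(z',P_bo)\le R_0 .
\]
Then $d(P_ao,hP_bo)\le d(P_ao,z)+d(hz',hP_bo)\le 2R_0$, so $\sigma_0:=P_a^{-1}hP_b\in\Sigma$. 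This gives
\[
h=P_a\,\sigma_0\,P_b^{-1}.
\]

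\emph{Step 2: normalizing the indices.} The relation $P_{j+n}=gP_j$ lets us shift indices by multiples of $n$. Replacing $h$ by $g^mhg^{m'}$ preserves both hypotheses: it stays outside $\Stab_\Gamma(A_g)\ni g$, and $A_g\cap g^mhg^{m'}A_g=g^m(A_g\cap hA_g)\neq\varnothing$. We may therefore assume $0\le a<n$ and $a\le b<a+n$. Put $i=a$, so that $P_a=Q_i$, and set $k=b-a$.

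Now conjugate by $Q_i$. The element $Q_i^{-1}gQ_i=[\omega^{(i)}]=bc$ has axis $Q_i^{-1}A_g$, where $B$ is the length-$k$ prefix of $\omega^{(i)}$ and $C$ is the complementary suffix. The conjugated element is
\[
Q_i^{-1}hQ_i=\sigma_0P_b^{-1}P_a=\sigma_0\,(s_{a+1}\cdots s_b)^{-1}=\sigma_0b^{-1}.
\]
Conjugation preserves both the nonempty intersection and the non-membership in the stabilizer, which gives the displayed conclusion with $\sigma=\sigma_0$.

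\emph{Step 3: forcing $k\le n/2$.} If $k>n/2$, I would rerun the argument with $h^{-1}$ in place of $h$. This is legitimate because $A_g\cap h^{-1}A_g\ni z'$ and $h^{-1}\notin\Stab_\Gamma(A_g)$. The roles of $z$ and $z'$ swap, so $a$ and $b$ swap and $\sigma_0$ becomes $\sigma_0^{-1}$, which lies in $\Sigma$ because $\Sigma$ is symmetric. After renormalizing $b$ into $[a,a+n)$, the new prefix length is $n-k<n/2$, or $0$ if $k=0$.

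\emph{Step 4: counting.} There are $n$ choices of $i$, at most $n/2+1$ choices of $k$, and $|\Sigma|$ choices of $\sigma$, where $|\Sigma|$ depends only on $S$ through $R_0$. This gives $O(n^2)$ triples.

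The main point needing care is the bookkeeping in Steps 2 and 3. The shifts by powers of $g$ and the swap $h\leftrightarrow h^{-1}$ must keep all hypotheses intact, and $b^{-1}$ must come out with $B$ a genuine prefix of $\omega^{(i)}$ rather than a suffix. I would check the index conventions $P_b^{-1}P_a$ versus $P_a^{-1}P_b$ explicitly there.
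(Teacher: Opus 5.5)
Your proposal is correct and follows essentially the same route as the paper. It uses the vertices near $z$ and $h^{-1}z$, the element $\sigma=P_a^{-1}hP_b\in\Sigma$, cyclic conjugation by $Q_i$, and the swap $h\leftrightarrow h^{-1}$ (with $\sigma\mapsto\sigma^{-1}$) to force $k\le n/2$. The only difference is cosmetic: you absorb all powers of $g$, including the wrap-around one, by replacing $h$ with $g^mhg^{m'}$ before conjugating, so the conjugate is exactly $\sigma_0b^{-1}$; the paper instead carries a residual factor $Q_i^{-1}g^{\alpha}Q_i$ and notes that it stabilizes $A_{bc}$.
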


\begin{proof}
By \cref{lem:axis-basics}, choose
$$
\eta\notin\Stab_\Gamma(A_g),
\qquad
z\in A_g\cap\eta A_g.
$$
By \cref{prop:free-semigroup}(iv), the periodic broken geodesic
associated with $\omega$ is at Hausdorff distance at most $R$ from
$A_g$. Every point of this broken geodesic lies on a segment
$[P_jo,P_{j+1}o]$ of length at most $U_S$. Hence every point of $A_g$
lies within
$$
R_0=R+U_S
$$
of some vertex $P_jo$.

Choose $r,s\in\Z$ such that
$$
d(z,P_ro)\le R_0,
\qquad
d(\eta^{-1}z,P_so)\le R_0.
$$
Set
$$
\sigma\coloneqq P_r^{-1}\eta P_s.
$$
Then
$$
\begin{aligned}
d(o,\sigma o)
&=
d(P_ro,\eta P_so)\\
&\le
d(P_ro,z)+d(z,\eta P_so)\\
&=
d(P_ro,z)+d(\eta^{-1}z,P_so)\\
&\le
2R_0.
\end{aligned}
$$
Thus
$
\sigma\in\Sigma.
$

Write
$$
r=\mu n+i,
\qquad
s=\nu n+j,
\qquad
0\le i,j<n.
$$
Since
$$
P_r=g^\mu Q_i,
\qquad
P_s=g^\nu Q_j,
$$
we have
$$
h_0\coloneqq 
Q_i\sigma Q_j^{-1}
=
g^{-\mu}\eta g^\nu.
$$
Because the powers of $g$ stabilize $A_g$,
$$
A_g\cap\eta A_g\ne\varnothing
\quad\Longrightarrow\quad
A_g\cap h_0A_g\ne\varnothing.
$$
Similarly, 
\begin{equation}\label{eq:stab-first}
\eta\in\Stab_\Gamma(A_g)
\quad\Longleftrightarrow\quad
h_0\in\Stab_\Gamma(A_g).
\end{equation}

Choose the shorter of the two cyclic intervals between $i$ and $j$.
After making the following change 
$$(\eta, r, s, \mu, \nu, i, j, \sigma, h_0) \mapsto (\eta^{-1}, s, r, \nu, \mu, j, i, \sigma^{-1}, h_0^{-1}), $$
if necessary, we may assume that
the corresponding forward cyclic block $B$ has length
$$
k\le n/2.
$$
Put
$$
b=[B].
$$
If this interval does not cross the end of the word, then
$$
Q_ib=Q_j.
$$
If it crosses the end of the word, then
$$
Q_ib=gQ_j.
$$
Thus in either case there is $\alpha\in\{0,1\}$ such that
\begin{equation}\label{eq:block-relation}
Q_ib=g^\alpha Q_j,
\qquad
b^{-1}=Q_j^{-1}g^{-\alpha}Q_i.
\end{equation}

Conjugating
$$
A_g\cap h_0A_g\ne\varnothing
$$
by $Q_i^{-1}$ gives
$$
A_{Q_i^{-1}gQ_i}
\cap
(Q_i^{-1}h_0Q_i)
A_{Q_i^{-1}gQ_i}
\ne\varnothing.
$$
Using $h_0=Q_i\sigma Q_j^{-1}$ and
\eqref{eq:block-relation}, we obtain
$$
\begin{aligned}
Q_i^{-1}h_0Q_i
&=
\sigma Q_j^{-1}Q_i\\
&=
\sigma b^{-1}
\bigl(Q_i^{-1}g^\alpha Q_i\bigr).
\end{aligned}
$$
The final factor
$$
Q_i^{-1}g^\alpha Q_i
$$
stabilizes $A_{Q_i^{-1}gQ_i}$. It therefore does not change the
translated axis, and hence
$$
A_{Q_i^{-1}gQ_i}
\cap
(\sigma b^{-1})A_{Q_i^{-1}gQ_i}
\ne\varnothing.
$$
The cyclic word $\omega^{(i)}$ is $BC$, so
$$
Q_i^{-1}gQ_i=bc.
$$
Therefore
$$
A_{bc}\cap(\sigma b^{-1})A_{bc}\ne\varnothing.
$$

The same calculation also gives the stabilizer condition. Indeed,
conjugating \eqref{eq:stab-first} by $Q_i^{-1}$ yields
$$
\eta\in\Stab_\Gamma(A_g)
\quad\Longleftrightarrow\quad
Q_i^{-1}h_0Q_i
\in
\Stab_\Gamma(A_{bc}).
$$
Since
$$
Q_i^{-1}h_0Q_i
=
\sigma b^{-1}
\bigl(Q_i^{-1}g^\alpha Q_i\bigr)
$$
and the final factor belongs to
$\Stab_\Gamma(A_{bc})$, this is equivalent to
$$
\eta\in\Stab_\Gamma(A_g)
\quad\Longleftrightarrow\quad
\sigma b^{-1}\in\Stab_\Gamma(A_{bc}).
$$
Thus we conclude
$
\sigma b^{-1}\notin\Stab_\Gamma(A_{bc}).
$

There are $n$ choices for $i$, at most
$\lfloor n/2\rfloor+1$ choices for $k$, and $|\Sigma|$ choices for
$\sigma$. Once $i$ and $k$ are fixed, the endpoint of the forward
cyclic block $B$ is determined modulo $n$. Hence the number of
admissible triples is $O(n^2)$, with the implied constant depending
only on $S$.
\end{proof}
Given a word $g=[\omega]\in \Gamma^+\le \psltc,$ we can compute its fixed points $x,y$ in terms of the coefficients of $g$. By writing down all $h=\sigma b^{-1}$ coming from \Cref{prop:finite}, and  applying the
cross-ratio criterion in \cref{prop:collision}, we can determine whether $g$ corresponds to simple or nonsimple closed geodesic. 

\section{Polynomial escape and random words}\label{sec:escape}
Fix an alphabet $S$ from \cref{prop:free-semigroup} such that
\begin{equation}\label{eq:entropy-gap}
q\coloneqq |S|>e^{U_S}.
\end{equation}
Choose once and for all a lift $\widetilde s\in G=\operatorname{SL}_2(\C)$ of each $s\in S$, set
$$
\widetilde S\coloneqq \{\widetilde s:s\in S\},
\qquad
\widetilde\Delta\coloneqq \langle\widetilde S\rangle \tn{ the \emph{group} generated by elements in } \widetilde S.
$$
Let $p\colon G\to\operatorname{PSL}_2(\C)$ be the quotient map. For a positive word $\omega=(s_1,\ldots,s_n)$, write
$$
\widetilde \omega=\widetilde s_1\cdots\widetilde s_n.
$$
These lifted products are all distinct: an equality in $G$ would give an equality of their projective products, and the positive semigroup on $S$ is free by \cref{prop:free-semigroup}. 

Only a uniform degree upper bound is needed in the sequel. Let $\mathcal P$ be the finite-dimensional real vector space of restrictions to $G\le \R^8$ of real polynomials of total degree at most $4$ in the eight ambient matrix coordinates. For $g\in G$, define
$$
R_g\colon \mathcal P\longrightarrow\mathcal P,
\qquad
(R_gf)(x)=f(xg).
$$
Then $R_gR_h=R_{gh}$. Moreover, if $f(x)$ has degree $k$, then for a fixed $g$, $f(xg)$ also has degree-$k$ since matrix multiplication is real linear in $x$. By \cref{prop:collision}, $ P_h\in\mathcal P $ for $h\in\Gamma\setminus\{e\}$. 

For a subspace $W\leq \mathcal P$, denote its zero locus and stabilizer by 
\begin{align*}
    Z(W) &\coloneqq \{x\in G:f(x)=0\text{ for every }f\in W\}, \\
H_W&\coloneqq \{g\in G:R_gW=W\}.
\end{align*}
For each $h\in\Gamma\setminus\{e\}$, the polynomial $P_h$ from
\cref{prop:collision} has degree at most $4$. Since the constant polynomial $1$ belongs to $\mathcal P$, we have $Z(\mathcal{P})=\emptyset$.
In general $Z(W)$ is not a group. 

\begin{lemma}[Proper polynomial stabilizers]\label{lem:stabilizer-proper}
If $W\leq \mathcal P$ is nonzero and $Z(W)\ne\varnothing$, then $H_W$ is a proper real algebraic Lie subgroup of $G$.
\end{lemma}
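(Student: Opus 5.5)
We need to show two things: that $H_W$ is a real algebraic Lie subgroup of $G$, and that it is proper.

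\emph{$H_W$ is an algebraic subgroup.} The map $g\mapsto R_g$ is a homomorphism from $G$ to $\mathrm{GL}(\mathcal P)$, since $R_gR_h=R_{gh}$ and $R_e=\mathrm{id}$. Its matrix entries are polynomial in the real coordinates of $g$: for a monomial $f$ of degree at most $4$, $f(xg)$ expands into monomials in $x$ whose coefficients are polynomials in $g$. We restrict to $G$ and fix a basis of $\mathcal P$ (for example, monomials modulo the ideal of $G$). Since $R_g$ is invertible, the condition $R_gW=W$ is equivalent to $R_gW\subseteq W$. Choose a basis $f_1,\dots,f_m$ of $W$ and linear functionals $\lambda_1,\dots,\lambda_r$ on $\mathcal P$ with common kernel $W$. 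Then $R_gW\subseteq W$ says exactly $\lambda_j(R_gf_i)=0$ for all $i,j$, and each of these is a polynomial equation in $g$. So $H_W$ is the zero set of finitely many real polynomials and is a subgroup. By Cartan's closed subgroup theorem it is a real algebraic Lie subgroup of $G$.

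\emph{$H_W$ is proper.} Suppose instead that $H_W=G$, so $W$ is invariant under every right translation. Pick $x_0\in Z(W)$. For any $g\in G$ and $f\in W$ we have $R_gf\in W$, hence $(R_gf)(x_0)=0$, i.e.\ $f(x_0g)=0$. Since $x_0g$ runs over all of $G$ as $g$ does, every $f\in W$ vanishes on $G$. Elements of $\mathcal P$ are restrictions to $G$, so $f$ is the zero element of $\mathcal P$. Thus $W=0$, contradicting that $W$ is nonzero. Hence $H_W\neq G$.

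The only delicate point is interpreting ``nonzero'': $\mathcal P$ consists of functions on $G$, not formal polynomials. This is exactly what makes the transitivity argument conclusive, because a polynomial vanishing on $G$ is the zero element of $\mathcal P$. The argument does not need the real Zariski closure of $G$ or irreducibility of $G$.
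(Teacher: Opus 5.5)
Your proposal is correct and follows essentially the same route as the paper: you express $R_gW\subseteq W$ (equivalent to $R_gW=W$ by invertibility) as polynomial equations in $g$ and apply the closed subgroup theorem, then rule out $H_W=G$ by translating a point $x_0\in Z(W)$ around the transitive right action. Using linear functionals with common kernel $W$ in place of the paper's coefficients along a complementary basis is only a cosmetic difference.
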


\begin{proof}
Choose a basis $f_1,\ldots,f_N$ of $\mathcal P$ such that
$W=\Span\{f_1,\ldots,f_k\}$. Since $R_g$ is invertible,
$R_gW=W$ is equivalent to $R_gW\subset W$. In this basis, the latter
condition is the vanishing of the coefficients of $R_gf_j$ along
$f_{k+1},\ldots,f_N$, for $1\le j\le k$. These coefficients are
polynomial functions of $g$.
 Hence the condition $R_gW=W$ is algebraic and $H_W$ is a real algebraic subgroup. Thus $H_W$ is a Zariski-closed algebraic subvariety; by the closed subgroup theorem, $H_W$ is a Lie subgroup. 

If $H_W=G$, choose $x_0\in Z(W)$. For $f\in W$ and $g\in G$, invariance gives $R_gf\in W$, hence
$
f(x_0g)=(R_gf)(x_0)=0.
$
Since $x_0G=G$, every $f\in W$ vanishes identically on $G$, contradiction.
\end{proof}

\begin{lemma}[Escape from polynomial stabilizers]\label{lem:coset-escape}
There are constants $C_0<\infty$ and $\eta>0$, depending only on $S$ and $M$, such that for every nonzero $W\leq \mathcal P$ with $Z(W)\ne\varnothing$, every $a\in\widetilde\Delta=\langle\widetilde S\rangle$, and independent uniform random variables $X_1,\ldots,X_n$ on $\widetilde S$,
$$
\mathbb P(X_1\cdots X_n\in aH_W)\le C_0e^{-\eta n}.
$$
\end{lemma}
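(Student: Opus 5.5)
The plan is to turn the probability into a lattice-point count inside a single coset and use the entropy gap \eqref{eq:entropy-gap}. By \cref{prop:free-semigroup}(ii) and the remark following \eqref{eq:entropy-gap}, the $q^n$ lifted products $\widetilde\omega$ with $\omega\in S^n$ are pairwise distinct. Moreover, by \eqref{eq:quasigeo} each satisfies $d(o,p(\widetilde\omega)o)\le nU_S$. Hence
$$
\mathbb P(X_1\cdots X_n\in aH_W)\le q^{-n}\,\#\{y\in aH_W\cap\widetilde\Delta:\ d(o,p(y)o)\le nU_S\}.
$$
It therefore suffices to prove a uniform orbit bound: there is $C$, independent of $W$, $a$ and $R$, such that
$$
\#\{y\in aH_W\cap\widetilde\Delta: d(o,p(y)o)\le R\}\le C(1+R)e^{R}.
$$
Since $q>e^{U_S}$, this gives the lemma with any $0<\eta<\log q-U_S$ after absorbing the polynomial factor into $C_0$.

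\emph{Reduction to a subgroup.} If the coset contains no element of $\widetilde\Delta$ there is nothing to prove. Otherwise fix $x_0\in aH_W\cap\widetilde\Delta$. Every other such $y$ has the form $y=x_0h$ with $h\in\Lambda_W:=H_W\cap\widetilde\Delta$. Then $d(o,p(y)o)=d(p(x_0)^{-1}o,\,p(h)o)$. Thus we must count orbit points of the discrete group $p(\Lambda_W)\le\Gamma^+$ in a ball of radius $R$ about an \emph{arbitrary} centre $x=p(x_0)^{-1}o$. The projection $p$ is at most $2$-to-$1$, so this costs only a factor $2$. The arbitrary centre is the reason the estimate must be geometric rather than a volume-growth statement at $o$.

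\emph{Classification of $\Lambda_W$.} By \cref{lem:stabilizer-proper}, $H_W$ is a proper real Zariski-closed subgroup of $G$. Hence the Zariski closure of $\Lambda_W$ is proper in $G$. Since $\Gamma^+$ is torsion-free and cocompact, $p(\Lambda_W)$ falls into one of two cases.
\begin{enumerate}[label=(\alph*)]
\item It is elementary. Being discrete and torsion-free, it is then trivial or infinite cyclic and loxodromic, by \cref{lem:axis-basics}.
\item It is nonelementary. Its Zariski closure is then a proper algebraic subgroup of $G$ containing a nonelementary discrete group. I would show that such a subgroup is a conjugate of $\mathrm{SL}_2(\R)$, possibly extended by a finite group, so that $p(\Lambda_W)$ preserves a round circle, i.e. a totally geodesic plane $P$. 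The argument uses the list of real forms and connected subgroups of $\mathfrak{sl}_2(\C)$: solvable and compact subgroups cannot contain a nonelementary discrete group.
\end{enumerate}
This classification step is the one I expect to need the most care, because the real Zariski topology on $\mathrm{SL}_2(\C)\subset\R^8$ is not the complex one. I would argue with the identity component $H_W^0$: its Lie algebra is a proper real subalgebra of $\mathfrak{sl}_2(\C)$ normalized by the nonelementary group $\Lambda_W\cap H_W^0$, which has finite index in $\Lambda_W$. This forces the Lie algebra to be $\{0\}$ or a conjugate of $\mathfrak{sl}_2(\R)$. In the first case $H_W$ is finite, which is absurd for a nonelementary $\Lambda_W$. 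In the second, $\Lambda_W$ normalizes a conjugate of $\mathrm{SL}_2(\R)$ and so preserves its invariant circle.

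\emph{Uniform counting.} Let $\iota>0$ be the injectivity radius of $M$. The orbit $\Gamma o$ is $2\iota$-separated, so the $\iota$-balls around orbit points are pairwise disjoint.
\begin{enumerate}[label=(\alph*)]
\item In the cyclic case the orbit points lie on the coarse orbit of a loxodromic element of translation length at least $\sys(M)$. Such points lie on a curve at constant distance $t$ from the axis and are spaced at least $2\iota$ apart along it. This curve meets any ball of radius $R$ in an arc of length $O(e^{R})$, uniformly in $t$, so there are $O(e^R)$ orbit points in the ball.
\item In the plane case the orbit points lie on the equidistant surface $E_t$ at distance $t=d(o,P)$ from $P$. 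The intrinsic metric of $E_t$ is that of $P$ scaled by $\cosh t$. The portion of $E_t$ inside any ball of radius $R$ has area $O(e^{R})$ uniformly in $t$: I would check this by projecting to $P$ and noting that extrinsic distance $\le R$ forces the projected distance to be at most about $R-2\log\cosh t+O(1)$. Each disjoint $\iota$-ball centred on $E_t$ cuts out area $\gg\iota^2$, so again there are $O(e^R)$ orbit points.
\end{enumerate}
All constants depend only on $\iota$, hence only on $M$, which gives the uniformity in $W$ and $a$. This yields the required $O((1+R)e^{R})$ bound and completes the plan.
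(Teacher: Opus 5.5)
Your reduction to counting $p(\Lambda_W)$-orbit points in a ball with arbitrary centre matches the paper. Your Lie-algebra classification (a proper real subalgebra of $\mathfrak{sl}_2(\C)$ normalized by a nonelementary discrete group is $\{0\}$ or conjugate to $\mathfrak{sl}_2(\R)$) is a valid hands-on replacement for the paper's Boubel--Zeghib step.

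\textbf{The gap is in the plane-case count.} You claim that $E_t\cap B(x,R)$ has area $O(e^R)$ uniformly in $t$. This is false for centres off $E_t$. The relevant centre $x=p(x_0)^{-1}o$ is off $E_t$ in general, since $p(x_0)$ need not preserve $P$.

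For example, take $x\in P$. A point $y\in E_t$ whose foot point lies at distance $s$ from $x$ satisfies $\cosh d(x,y)=\cosh t\cosh s$. The area element of $E_t$ is $\cosh^2t$ times that of $P$. Hence $E_t\cap B(x,R)$ has area $2\pi\cosh t\,(\cosh R-\cosh t)$. When $\cosh t=\frac12\cosh R$ this equals $\frac{\pi}{2}\cosh^2R\asymp e^{2R}$.

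Your estimate ``projected distance at most $R-2\log\cosh t+O(1)$'' holds only when both points lie on $E_t$. So packing $2\iota$-separated points yields only $O(e^{2R})$ orbit points. With $R=nU_S$, the resulting bound $q^{-n}e^{2nU_S}$ need not decay: you only know $\log q>U_S$, while $\log q\le 2U_S+O(1)$ because $S$ lies in a ball of radius $U_S$.

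\textbf{The missing idea.} The orbit on $E_t$ is far sparser than $2\iota$-separation in $\hts$ suggests. This sparsity comes from translation lengths, not from the injectivity radius. The paper argues by projection:
\begin{itemize}
\item $\operatorname{pr}_P$ is $1$-Lipschitz and $\Lambda_W$-equivariant.
\item With $y=\operatorname{pr}_P(o)$, distinct $\gamma,\gamma'\in p(\Lambda_W)$ satisfy $d(\gamma y,\gamma' y)\ge\ell(\gamma^{-1}\gamma')\ge\sys(M)$.
\item So $\gamma\mapsto\gamma y$ is injective, and it sends the orbit points in $B(x,R)$ to a $\sys(M)$-separated subset of a disk of radius $R$ in $P\simeq\mathbb H^2$.
\item Area packing in $P$ then gives $O(e^R)$, uniformly in $x$, $t$ and $W$.
\end{itemize}

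The same idea should replace your helix argument in the cyclic case. That argument needs a choice of helix, since the rotation angle of $u$ is only defined modulo $2\pi$. It also relies on an unproved uniform arc-length bound for balls with arbitrary centres. Instead, $d(u^mo,u^ko)\ge|m-k|\,\ell(u)\ge|m-k|\sys(M)$ directly gives at most $1+2R/\sys(M)$ orbit points in any ball of radius $R$.
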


\begin{proof}
Since $a\in\widetilde\Delta$,
\begin{equation}\label{eq:actual-coset}
\widetilde\Delta\cap aH_W
=a(\widetilde\Delta\cap H_W).
\end{equation}
If $\gamma=a\delta$ with $\delta\in\widetilde\Delta\cap H_W$, then
$$
d(o,p(\gamma)o)
=
d\bigl(p(a)^{-1}o,p(\delta)o\bigr).
$$
Let $\Lambda_W=p(\widetilde\Delta\cap H_W)<\Gamma^+$. 
Thus the required estimate reduces to counting the $\Lambda_W$-orbit in a ball with arbitrary center. The estimates below are uniform in that center.

Let $J_W$ be the real Zariski closure of $\Lambda_W$ in $\operatorname{PSL}_2(\C)$. Since  $\sltc$ is a path-connected Lie group, it has no proper finite-index closed subgroups. Combining with \cref{lem:stabilizer-proper} and the finiteness of the central covering $p$, we deduce that the projective image of $H_W$ is a proper real algebraic subgroup of $\psltc$; hence $J_W$ is proper.  The passage from $\widetilde\Delta\cap H_W$ to its projective image
has fibers of cardinality at most two; this factor is absorbed into
the implied constant below. 

We claim that $\Lambda_W$ is either elementary or preserves a totally geodesic plane. If the identity component $J_W^\circ$ of $J_W$ is compact, then $J_W$ is compact because it has finitely many components, and the discrete subgroup $\Lambda_W$ is finite, hence trivial. Suppose $J_W^\circ$ is noncompact. If it fixes a point of $\partial\hts$, normality of $J_W^\circ$ in $J_W$, the finiteness of $J_W/J_W^\circ$ and B. Martelli [\citenum{bmIntroductionGeometricTopology}, Proposition 5.1.4] imply that $\Lambda_W$ is elementary.

Now suppose that $J_W^\circ$ is noncompact and has no fixed point at infinity. By the geometric Lie subgroup theorem of C. Boubel--A. Zeghib \cite[Theorem~1.1]{BoubelZeghib}, $J_W^\circ$ preserves a totally geodesic copy of $\mathbb H^d$ and contains its full connected isometry group. Since $J_W$ is proper, $d\ne3$. If $d=1$, the invariant geodesic is the unique axis of the translation subgroup contained in $J_W^\circ$; hence it is preserved by the normalizer $J_W$, and $\Lambda_W$ is elementary. If $d=2$, $J_W^\circ$ is the connected stabilizer of the resulting plane $P\simeq\mathbb H^2$. Therefore its normalizer, and hence all of $J_W$, preserves $P$. This proves the claim.

We now count the coset in \eqref{eq:actual-coset}. If  $\Lambda_W \le \Gamma^+$ is elementary, then $\Lambda_W$ is either trivial or infinite cyclic. If $\Lambda_W=\langle u\rangle$, then for all $m,n\in\Z$ and $y\in\hts$,
$
d(u^my,u^ny)\ge |m-n|\ell(u)\ge |m-n|\sys(M).
$
Thus any radius-$R$ ball contains $O_M(R+1)$ points of the orbit. 

Otherwise, let $P$ be the invariant totally geodesic plane and $\operatorname{pr}_P$ the nearest-point projection. It is $1$-Lipschitz and $\Lambda_W$-equivariant. Put $y=\operatorname{pr}_P(o)$. For distinct $\gamma,\gamma'\in\Lambda_W$,
$$
d_P(\gamma y,\gamma'y)
=d_{\hts}(\gamma y,\gamma'y)
\ge \ell(\gamma^{-1}\gamma')
\ge \sys(M).
$$
Thus the projected orbit is uniformly separated in $P\simeq\mathbb H^2$, and hyperbolic area packing gives $O_M(e^R)$ points in every radius-$R$ disk, uniformly in the center. Therefore, the original orbit of $o$ in $\hts$ has the same upper bound. 

Consequently, in both cases and uniformly in $a$ and $W$,
\begin{equation}\label{eq:actual-coset-count}
\#\{\gamma\in\widetilde\Delta\cap aH_W:
 d(o,p(\gamma)o)\le R\}\ll_M e^R.
\end{equation}
Every length-$n$ lifted positive product is distinct by the freeness of semigroup generated by $S$ and the triangle inequality
gives
$$
d(o,p(X_1\cdots X_n)o)
\le
\sum_{j=1}^n d(o,p(X_j)o)
\le nU_S.
$$ Taking $R=nU_S$ in \eqref{eq:actual-coset-count} and dividing by the $q^n$ possible products gives
$$
\mathbb P(X_1\cdots X_n\in aH_W)
\le C_M\frac{e^{nU_S}}{q^n}
=C_Me^{-n(\log q-U_S)}.
$$
The gap in \eqref{eq:entropy-gap} gives the result with $\eta=\log q-U_S>0$.
\end{proof}

\begin{proposition}[Polynomial escape]\label{prop:polynomial-escape}
There are constants $C_S,c_S >0$, depending only on $S$ and $M$, such that for every nonzero $f\in\mathcal P$, every $y\in G$, and independent uniform random variables $X_1,\ldots,X_n$ on $\widetilde S$,
$$
\mathbb P\bigl(f(yX_1\cdots X_n)=0\bigr)
\le C_S e^{-c_S n}.
$$
\end{proposition}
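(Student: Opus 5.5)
The plan is to argue by a dimension descent on subspaces of $\mathcal P$, in the style of the escape-from-subvarieties arguments of Eskin--Mozes--Oh and Breuillard--Gamburd, using \cref{lem:coset-escape} as the only input that involves the group $\Gamma$. We write the event through right translates. For $g=X_1\cdots X_n$ and $0\le k\le n$, put $x_k=yX_1\cdots X_k$ and $Y_k=X_{k+1}\cdots X_n$. Then
$$
f(yX_1\cdots X_n)=(R_{Y_k}f)(x_k).
$$
So the event says that the random polynomial $R_{Y_k}f$ vanishes at the random point $x_k$, and the two factors are independent. Let $N=\dim\mathcal P$.

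\textbf{Step 1: a dichotomy for translates.} For a nonzero subspace $W\le\mathcal P$ we study the random subspace $R_{X_{k+1}\cdots X_{k+m}}W$ for blocks of bounded length $m$. Fix $x\in G$ and a nonzero $W$ with $x\in Z(W)$. Either some letter $s\in\widetilde S$ moves $W$ to a subspace $R_sW\ne W$ on which the evaluation at the new point does not vanish identically, or every letter preserves $W$ along the relevant orbit. In the first case the vanishing condition is strictly more restrictive: it cuts $W$ down to a subspace of smaller dimension, or fails outright. Since each letter has probability $1/q$, this happens with probability at least $1/q$ per block. After at most $N$ successful drops the polynomial would have to vanish on a point where no nonzero subspace vanishes, which is impossible. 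A standard large-deviation count over the $\lfloor n/m\rfloor$ disjoint blocks then bounds the probability that fewer than $N$ drops occur, the blocks being independent. That bound is $\binom{n/m}{N}(1-q^{-m})^{n/m-N}$, which is $\le Ce^{-cn}$.

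\textbf{Step 2: the non-dropping case via stabilizers.} The other branch of the dichotomy is that the walk keeps returning to configurations where $R_s$ preserves a subspace $W$ with $Z(W)\ne\varnothing$ (the zero set contains the current point). In that case the relevant products lie in a single coset $aH_W$. To make this precise we would take $W_\infty=\operatorname{span}\{R_\delta f:\delta$ a positive product$\}$. It is $R_s$-invariant for every $s\in\widetilde S$, hence invariant under the group $\widetilde\Delta$ by finite-dimensionality and invertibility. It is spanned by translates of length at most $N$, since the increasing chain of spans stabilizes within $N$ steps. If $Z(W_\infty)\ne\varnothing$, then $\widetilde\Delta\subset H_{W_\infty}$. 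Applying \cref{lem:coset-escape} with $a=e$ would then give $1\le C_0e^{-\eta n}$, which is a contradiction for large $n$. Hence $Z(W_\infty)=\varnothing$. Consequently, for every $x\in G$ there is a positive product $\delta$ of length at most $N$ with $(R_\delta f)(x)\ne0$. The same reasoning, applied to the intermediate subspaces arising in the descent, is what supplies the uniform constants. The probability that the walk is trapped in a coset $aH_W$ with $a\in\widetilde\Delta$ is $\le C_0e^{-\eta n'}$ on a block of length $n'$, uniformly in $W$ and $a$.

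\textbf{Step 3: uniformity and assembly.} The constants must not depend on $f$ or $y$. This works because all choices range over a bounded set of data. The dimension of the subspaces is at most $N$. The block length is $m\le N$. The constants of \cref{lem:coset-escape} are uniform over all $W$ with $Z(W)\neq\varnothing$ and all $a\in\widetilde\Delta$. Combining the block count of Step 1 with the escape estimate of Step 2 gives the bound $C_Se^{-c_Sn}$.

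The main obstacle is Step 1: turning ``some short continuation makes the value nonzero'' into a probabilistic drop. The value at time $n$ depends on the whole suffix, not just one block. I would first try to handle this by conditioning from the right. One fixes the suffix $Y_k$, so that $R_{Y_k}f$ is a fixed nonzero polynomial. One then applies the statement inductively to the prefix, inducting on the dimension of the span $\operatorname{span}\{R_{X_{j}\cdots X_k}R_{Y_k}f\}$. \cref{lem:coset-escape} is invoked whenever that span stops growing.
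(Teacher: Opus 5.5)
Your proposal has a genuine gap: no step actually produces exponential decay.

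\textbf{Step~1 is unjustified.} Your block count presupposes that a ``drop'' is permanent. But $f(yX_1\cdots X_n)=0$ is a condition at the single time $n$, not a survival event along the path. A letter that makes some evaluation nonzero at an intermediate time constrains nothing at time $n$. So there is no monotone quantity, and the bound $\binom{n/m}{N}(1-q^{-m})^{n/m-N}$ has no basis.

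The reasoning uses nothing that rules out the following example. Take the simple random walk $S_n$ on $\mathbb Z\subset\mathbb R$ and $W=\Span\{x\}$ in polynomials of degree at most $1$. Every step moves $W$ to $R_sW=\Span\{x+s\}\ne W$, and every point leaves $Z(W)=\{0\}$ in one step. Yet $\mathbb P(S_{2n}=0)\asymp n^{-1/2}$.

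\textbf{Step~2 is only qualitative.} What you establish there is that $Z(W_\infty)=\varnothing$, so from every $x$ some positive product of length at most $N$ makes $f$ nonzero. This is a bounded-escape statement, and by the example above it cannot give $e^{-cn}$. You use \cref{lem:coset-escape} only with $a=e$, as a contradiction for large $n$. Its quantitative content at scale $\asymp n$, with a nontrivial coset representative, never enters, and Step~3 does not say how it would.

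\textbf{The missing idea is a second-moment decoupling.} This is how the paper argues.
Put $F(x)=f(yx)$ and $V_F=\Span\{R_gF:g\in G\}$; then $Z(V_F)=\varnothing$ trivially, since $xG=G$. Split $n=u+v$ with $u=\lfloor n/2\rfloor$. Let $A$ be a product of $u$ letters and $B,B'$ independent products of $v$ letters. For $W\le V_F$ with $Z(W)\ne\varnothing$, Cauchy--Schwarz in $A$ gives
$$\mathbb P(AB\in Z(W))^2\le\mathbb P\bigl(A\in Z(R_BW+R_{B'}W)\bigr).$$
There are two cases.
\begin{itemize}
\item If $R_BW=R_{B'}W$, then $B'\in BH_W$. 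This has probability at most $C_0e^{-\eta v}$ by \cref{lem:coset-escape} applied with $a=B$.
\item Otherwise $W'=R_BW+R_{B'}W\le V_F$ has strictly larger dimension, and the prefix $A$ must lie in $Z(W')$. This is handled by induction on $\operatorname{codim}_{V_F}W$, ending at $V_F$, where the event is empty.
\end{itemize}

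Your last paragraph gestures at inducting on a span after conditioning on one suffix. With a single suffix, however, there is no second translate to compare with. It is the two independent suffixes that turn the coset lemma into a dimension increase, and this is exactly what your Steps~1 and~3 would need.
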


\begin{proof}
Put
$$
F(x)\coloneqq f(yx).
$$
Left multiplication by $y$ is a bijective real-linear change of the matrix coordinates, so $F$ is in $\mathcal P$ and nonzero. Let
$$
V_F\coloneqq \Span\{R_gF:g\in G\}\leq \mathcal P
$$
be the span of the right translates of $F$. This space is finite-dimensional and invariant under every right translation. Moreover,
\begin{equation}\label{eq:translated-span-empty}
Z(V_F)=\varnothing.
\end{equation}
Indeed, if $x\in Z(V_F)$, then for every $g\in G$,
$
F(xg)=(R_gF)(x)=0.
$
Since $xG=G$, this would force $F$ to vanish identically on $G$, a contradiction.

We prove a uniform estimate by induction on the codimension of a nonzero subspace $W$ inside $V_F$.
For each integer $d\ge0$, we claim that there are constants $K_d\ge0$
and $\tau_d>0$, depending only on $S$, $M$, and $d$, such that for every
nonzero $F\in\mathcal P$ and every $W\le V_F$ satisfying
$$
Z(W)\ne\varnothing,
\qquad
\operatorname{codim}_{V_F}W\le d,
$$
one has
\begin{equation}\label{eq:codim-escape}
\mathbb P(X_1\cdots X_n\in Z(W))
\le K_de^{-\tau_dn}
\qquad(n\ge1).
\end{equation}
For $d=0$ the claim follows from \eqref{eq:translated-span-empty}. Set $K_0=0$ and $\tau_0=1$.

Now let $d\ge1$ and assume the claim for $d-1$. If
$\operatorname{codim}_{V_F}W\le d-1$, there is nothing to prove, so we
may assume
$
\operatorname{codim}_{V_F}W=d.
$
It suffices to consider $n\ge4$, since the remaining finitely many
values of $n$ can be absorbed into $K_d$. Write
$$
n=u+v,
\qquad
u=\lfloor n/2\rfloor,
\qquad
v=n-u.
$$
Let $A$ be a product of $u$ independent letters, and let $B,B'$ be
independent products of $v$ independent letters, all mutually
independent. Put
$$
p_W\coloneqq \mathbb P(AB\in Z(W)).
$$
Conditioning on $A$ and applying Cauchy--Schwarz gives
$$
\begin{aligned}
p_W^2
&=
\left(
\mathbb E_A
\bigl[\mathbb P_B(AB\in Z(W)\mid A)\bigr]
\right)^2\le
\mathbb E_A
\left[
\mathbb P_B(AB\in Z(W)\mid A)^2
\right]\\
&=
\mathbb P(AB\in Z(W) \tn{ and } AB'\in Z(W)).
\end{aligned}
$$
For fixed $B,B'$, the condition that 
$
AB\in Z(W)
\text{ and }
AB'\in Z(W)
$
is equivalent to
$$
A\in Z(R_BW+R_{B'}W)
$$
since $Z(W)$ consists of elements of $G$ which vanish under every polynomial in $W$. 
If
$
R_BW=R_{B'}W,
$
then, since $R_B$ is invertible,
$
R_{B^{-1}B'}W=W,
$
and hence
$$
B^{-1}B'\in H_W,
\qquad\text{or equivalently}\qquad
B'\in BH_W.
$$
Conditioning on $B$ and applying \cref{lem:coset-escape} to the
length-$v$ product $B'$ give, uniformly in $B$ and $W$,
$
\mathbb P(R_BW=R_{B'}W\mid B)
\le C_0e^{-\eta v}.
$
Therefore
$$
\mathbb P(R_BW=R_{B'}W)
\le C_0e^{-\eta v}.
$$
Suppose now that
$
R_BW\ne R_{B'}W,
$
and set
$$
W'\coloneqq R_BW+R_{B'}W.
$$
Because $V_F$ is invariant under right translation,
$
W'
$ is a subspace of $V_F$. 
Moreover, 
$
\dim R_BW=\dim R_{B'}W=\dim W.
$
Since these two subspaces are distinct,
$
\dim W'\ge\dim W+1.
$
Consequently,
$$
\begin{aligned}
\operatorname{codim}_{V_F}W'
&=
\dim V_F-\dim W'\le
\dim V_F-\dim W-1\\
&=
\operatorname{codim}_{V_F}W-1=
d-1.
\end{aligned}
$$
Thus, if $Z(W')=\varnothing$, this case contributes zero; otherwise
$W'$ satisfies the induction hypothesis. Since $A$ is a product of
$u$ independent letters,
$$
\mathbb P(A\in Z(W')\mid B,B')
\le
K_{d-1}e^{-\tau_{d-1}u}.
$$

Combining the two cases gives
$$
\begin{aligned}
p_W^2
&\le
\mathbb P(R_BW=R_{B'}W)+
\mathbb E_{B,B'}\!\left[
\mathbf 1_{\{R_BW\ne R_{B'}W\}}
\mathbb P(A\in Z(W')\mid B,B')
\right]\\
&\le
C_0e^{-\eta v}
+
K_{d-1}e^{-\tau_{d-1}u}.
\end{aligned}
$$
Since $u,v\ge n/3$ for $n\ge4$, 
$
p_W^2
\le
(C_0+K_{d-1})
\exp\left(
-\frac n3\min\{\eta,\tau_{d-1}\}
\right).
$
Taking square roots,
$$
p_W
\le
(C_0+K_{d-1})^{1/2}
\exp\left(
-\frac n6\min\{\eta,\tau_{d-1}\}
\right).
$$
Thus we may take
$$
\tau_d
=
\frac16\min\{\eta,\tau_{d-1}\}
$$
and then choose $K_d$ sufficiently large, depending only on
$S$, $M$, and $d$, to cover both the preceding estimate and the
finitely many cases $n<4$. This proves the induction.

Finally, let $N=\dim\mathcal P$. Since
$
\operatorname{codim}_{V_F}\Span\{F\}
=\dim V_F-1\le N-1,
$
we apply \eqref{eq:codim-escape} with $W=\Span\{F\}$ and $d=N-1$. If $Z(\Span\{F\})=\varnothing$, the desired probability is already zero; otherwise
$$
\mathbb P\bigl(f(yX_1\cdots X_n)=0\bigr)
=\mathbb P(X_1\cdots X_n\in Z(\Span\{F\}))
\le K_{N-1}e^{-\tau_{N-1}n}.
$$
Taking $C_S =K_{N-1}$ and $c_S =\tau_{N-1}$ proves the proposition.
\end{proof}

Let $\Omega_n=S^n$ be equipped with the uniform product measure, and for $\omega=(Y_1,\ldots,Y_n)$ define
$$
g(\omega)=Y_1\cdots Y_n\in\Gamma^+.
$$

\begin{theorem}\label{thm:simple-prob}
There are constants $C,c>0$, depending on $S$ and $M$, such that
$$
\mathbb P\bigl(\pi_M(A_{g(\omega)})\text{ is not embedded}\bigr)
\le Cn^2e^{-cn}.
$$
Consequently,
$$
\#\{\omega\in S^n:\pi_M(A_{g(\omega)})\text{ is embedded}\}
=|S|^n\bigl(1-O(n^2e^{-cn})\bigr).
$$
\end{theorem}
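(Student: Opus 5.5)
We combine \cref{prop:finite}, \cref{prop:collision} and \cref{prop:polynomial-escape} through a union bound over the $O(n^2)$ collision configurations. Suppose $\pi_M(A_{g(\omega)})$ is not embedded. By \cref{prop:finite} there are a shift $i$, a length $k\le n/2$ and $\sigma\in\Sigma$ with $\omega^{(i)}=BC$, $|B|=k$, such that $h\coloneqq\sigma b^{-1}\notin\Stab_\Gamma(A_{bc})$ and $A_{bc}\cap hA_{bc}\ne\varnothing$. In particular $h\neq e$. Applying \cref{prop:collision} to the lift $\widetilde b\widetilde c$ of $bc$ gives $P_h(\widetilde b\widetilde c)=0$, where $P_h\in\mathcal P$ is nonzero.

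For fixed $(i,k,\sigma)$ we bound the probability of this event. Since $\omega$ is uniform on $S^n$, the cyclic shift $\omega^{(i)}$ is also uniform, so $B$ and $C$ are independent uniform words of lengths $k$ and $n-k\ge n/2$. We condition on $B$. Then $h=\sigma b^{-1}$ is fixed, and hence so is the nonzero polynomial $f\coloneqq P_h$. Setting $y=\widetilde b$, the event becomes $f(yX_1\cdots X_{n-k})=0$ with $X_j$ independent uniform on $\widetilde S$.

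\cref{prop:polynomial-escape} holds uniformly in $f$ and $y$, so the conditional probability is at most $C_Se^{-c_S(n-k)}\le C_Se^{-c_Sn/2}$. Averaging over $B$ keeps this bound. The one technical point is that $P_h$ depends on $B$ only through $h$, and the escape estimate is uniform over all nonzero $f\in\mathcal P$, so conditioning causes no difficulty. For $k=0$ we have $h=\sigma$; either $\sigma$ stabilizes the axis, and that configuration is excluded, or the same bound applies.

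Summing over the at most $n(\lfloor n/2\rfloor+1)|\Sigma|$ triples gives
$$
\mathbb P\bigl(\pi_M(A_{g(\omega)})\text{ not embedded}\bigr)\le n(n/2+1)|\Sigma|\,C_Se^{-c_Sn/2}\le Cn^2e^{-cn}
$$
with $c=c_S/2$. The counting statement follows by multiplying by $|S|^n$, since the measure is uniform.

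We expect the main obstacle to be checking that \cref{prop:finite} really yields an event expressible through the independent block $C$ alone once $B$ is fixed. This requires that $\sigma$ range over a fixed finite set independent of $\omega$, which holds because $\Sigma$ depends only on $S$. It also requires that the lift ambiguity $\pm I$ not matter, which holds because $P_h$ is invariant under $\widetilde g\mapsto-\widetilde g$ by \cref{prop:collision}.
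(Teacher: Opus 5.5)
Your proposal is correct and is essentially the paper's proof. Both take a union bound over the $O(n^2)$ triples $(i,k,\sigma)$ from \cref{prop:finite}, condition on the short block $B$ so that $h=\sigma b^{-1}$ and the nonzero polynomial $P_h$ are fixed, and apply the uniform estimate of \cref{prop:polynomial-escape} with $y=\widetilde b$ to the independent block $C$ of length $n-k\ge n/2$. One cosmetic point: for $k=0$ the case split should be on the $B$-measurable condition $h=e$ versus $h\ne e$ (the paper builds $h_B\ne e$ into the event), not on whether $\sigma$ stabilizes the axis, which depends on $C$. Your reduction to the superset $\{h\ne e,\ P_h(\widetilde b\widetilde c)=0\}$ already handles this.
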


\begin{proof}
If $\pi_M(A_{g(\omega)})$ is not embedded, \cref{prop:finite} gives a triple $(i,k,\sigma)$ with $k\le n/2$. Fix such a triple, apply the corresponding cyclic permutation, and write
$$
\omega^{(i)}=BC,
\qquad |B|=k,
\qquad |C|=n-k.
$$
The two blocks use disjoint sets of independent coordinates. Put
$$
b=[B],
\qquad
c=[C],
\qquad
h_B=\sigma b^{-1},
$$
and define the event
$$
\mathcal E_{i,k,\sigma}
\coloneqq \{h_B\ne e,\ A_{bc}\cap h_BA_{bc}\ne\varnothing\}.
$$
By \cref{prop:finite}, the nonsimple event is contained in the union of the events indexed by $O(n^2)$ admissible triples.

Condition on the block $B$ and put $r=n-k\ge n/2$. Let $\widetilde b$ be the product of the chosen lifts of the letters of $B$. If $h_B\ne e$, choose the polynomial $P_{h_B}$ in \cref{prop:collision} and for $x\in G$ define
$$
F_{B,\sigma}(x)\coloneqq P_{h_B}(\widetilde b x).
$$
Then $F_{B,\sigma}\in\mathcal P$ is nonzero, and the event $\mathcal E_{i,k,\sigma}$ implies
$$
F_{B,\sigma}(\widetilde C)=0,
$$
where $\widetilde C$ is a product of $r$ independent uniform elements of $\widetilde S$. Hence \cref{prop:polynomial-escape} gives
$$
\mathbb P(\mathcal E_{i,k,\sigma}\mid B)
\le C_S e^{-c_S r}
\le C_S e^{-c_S n/2}.
$$
If $h_B=e$, the event is empty and the same bound is automatic. The estimate is uniform in $B,i,k,\sigma$; averaging over $B$ and taking the union over $O(n^2)$ triples proves the first assertion. The counting statement follows by multiplying the complementary probability by $|S|^n$.
\end{proof}

\section{Multiplicity and completion of the proof}\label{sec:completion}

The preceding section counts words whose projected axes are embedded,
whereas $N_M^{\mathrm{simp}}(L)$ counts primitive simple closed geodesics. We therefore need to control how many words
can produce the same primitive closed geodesic. 

\begin{lemma}\label{lem:multiplicity}
There is $C_{S}<\infty$ such that every primitive simple closed geodesic
$\gamma$ corresponds to $A_{g(\omega)}$ for at most $C_{S}n$
words $\omega\in S^n$.
\end{lemma}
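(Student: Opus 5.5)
Fix a primitive simple closed geodesic $\gamma$ and a word $\omega\in S^n$ with $\pi_M(A_{g(\omega)})=\gamma$. Since $g=g(\omega)$ is loxodromic in $\Gamma^+$, $A_g$ is a lift of $\gamma$, and $g$ generates a finite-index subgroup of $\Stab_\Gamma(A_g)\cong\Z$ (\cref{lem:axis-basics}(i)). Write $g=\gamma_0^m$ with $\gamma_0$ a generator of this stabilizer, chosen so that its translation direction agrees with that of $g$. The plan is to show that, up to cyclic shifts, $\omega$ is determined by the pair $(\gamma,m)$ together with a bounded amount of data. I also want to show that only $O(1)$ values of $m$ are possible.

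First, $m$ is bounded. We have $m\,\ell(\gamma)=\ell(g)\le d(o,go)\le nU_S$, and \cref{prop:free-semigroup}(iii) gives $\ell(g)=\lim_j d(o,g^jo)/j\ge\alpha n$. Hence $m\in[\alpha n/\ell(\gamma),\,U_Sn/\ell(\gamma)]$, and $\ell(\gamma)\in[\alpha n/m,\,U_S n/m]$. This alone does not bound $m$ by a constant. The lemma, however, is stated per geodesic $\gamma$, so I will instead count conjugacy data directly.

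Second, use the lifts. Any lift of $\gamma$ has the form $\beta A_{\gamma_0}$ for $\beta\in\Gamma$. If $\omega$ yields $A_{g(\omega)}=\beta A_{\gamma_0}$, then $g(\omega)=\beta\gamma_0^{m}\beta^{-1}$, because $g(\omega)$ lies in the cyclic stabilizer and $m$ is fixed by the length. By \cref{prop:free-semigroup}(iv), the broken geodesic through $o, s_1o, s_1s_2o,\dots$ stays within $R$ of $A_{g(\omega)}$. In particular $o$ lies within $R$ of $\beta A_{\gamma_0}$, so $d(\beta^{-1}o, A_{\gamma_0})\le R$. Modulo right multiplication of $\beta$ by powers of $\gamma_0$, which do not change $\beta\gamma_0^m\beta^{-1}$, we may take $\beta^{-1}o$ in a fixed fundamental segment of $A_{\gamma_0}$ thickened by $R$. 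That segment has length $\ell(\gamma)$, and the orbit points in the tube number $O(\ell(\gamma)+1)$ by discreteness, using a volume packing bound for a radius-$R$ tube around a segment. So there are $O(\ell(\gamma)+1)$ choices for the conjugate $g(\omega)$. By freeness (\cref{prop:free-semigroup}(ii)), each element of $\Gamma^+$ is $[\omega]$ for at most one $\omega\in S^n$.

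Third, I combine these with the bound on $m$. A given $m$ fixes $g(\omega)$ up to $O(\ell(\gamma)+1)$ choices, and we have $\ell(\gamma)\le U_Sn/m$. So the count for that $m$ is $O(n/m+1)$. This is the main obstacle, since summing naively over $m$ gives $O(n\log n)$. The fix is the following observation. For a fixed $\gamma$, the identity $\ell(g)=m\ell(\gamma)$ together with $\alpha n\le\ell(g)\le U_Sn$ says that $m$ ranges over an interval of ratio $U_S/\alpha$. If $m\ge1$ lies in $[\alpha n/\ell,\,U_Sn/\ell]$, each term is $O(\ell+1)$ with $\ell\le U_S n/m$. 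The number of admissible $m$ is at most $(U_S-\alpha)n/\ell+1$. So the total is $O\bigl((n/\ell+1)(\ell+1)\bigr)=O(n+\ell+1)$, which is $O(n)$ since $\ell\le U_Sn$ and $\ell\ge\sys(M)$. Here it matters that $\ell$ is bounded below by the systole, so that $n/\ell\cdot 1=O(n)$. This gives the bound $C_Sn$, with $C_S$ depending on $S$, $R$, $U_S$, $\alpha$ and $\sys(M)$.
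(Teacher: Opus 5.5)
Your argument is correct and is essentially the paper's: both bound the number of lifts of $\gamma$ within distance $R$ of $o$ by $O(\ell(\gamma))$, bound the admissible exponents by $O(nU_S/\ell(\gamma))$, and use freeness to get at most one word per group element, so the product is $O(n)$. The only real difference is how the lifts are counted: you pack orbit points $\beta^{-1}o$ in an $R$-tube around a fundamental segment of $A_{\gamma_0}$, which does not use simplicity of $\gamma$, while the paper counts lifts of sample points of $\gamma$ lying in $B(o,R+2)$.

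Some remarks:
\begin{itemize}
\item The lower bound $\ell(g)\ge\alpha n$ is not needed.
\item The stated goal of $O(1)$ values of $m$, and the worry about an $O(n\log n)$ sum, are unnecessary, since $\gamma$, and hence $\ell(\gamma)$, is fixed.
\item You should write $g(\omega)=\beta\gamma_0^{\pm m}\beta^{-1}$, since a word may traverse $\gamma$ in either direction. This costs only a factor of $2$.
\end{itemize}
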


\begin{proof}
Let $R$ be the Hausdorff constant in \cref{prop:free-semigroup}. 
We first
bound the number of lifts of $\gamma$ that can occur as $A_{g(\omega)}$.

Choose points $x_1,\ldots,x_J$ on $\gamma$ with consecutive spacing at
most $1$. Since $\ell(\gamma)\ge\sys(M)>0$, they may be chosen so that
$$
J\le C_M\ell(\gamma)
$$
for a constant depending only on $M$. If a lift $\widehat\gamma\subset \hts$ of the closed geodesic $\gamma$ corresponds to $A_{g(\omega)}$ for some $\omega\in S^n$, then \Cref{prop:free-semigroup} (iv) implies $d(o,\hat{\gamma})\le R$. Choose $y\in \hat{\gamma}$ with
$d(o,y)\le R+1$. The point $\pi_M(y)$ lies within distance $1$ along
$\gamma$ of some $x_j$, so lifting the corresponding segment along $\hat{\gamma}$
shows that a lift of $x_j$ lying on $\hat{\gamma}$ belongs to $B(o,R+2)$.
The number of lifts of a point of $M$ lying in $B(o,R+2)$ is uniformly
bounded. Indeed, choose a closed fundamental domain $F\subset\hts$, and for every $x\in M$ choose a lift
$\widetilde x\in F$. If another lift $\eta\widetilde x$ lies in
$B(o,R+2)$, then
$$
\eta F\cap B(o,R+2)\ne\varnothing.
$$
Only finitely many $\eta\in\Gamma$ satisfy this condition. The
resulting bound is independent of $x$. Since $\gamma$ is simple, a lift
of $x_j$ lies on a unique lift of $\gamma$. It follows that, for some
constant $C_1=C_1(S,M)$,
\begin{equation}\label{eq:nearby-lift-count}
\#\{\hat{\gamma}:\ \hat{\gamma}\text{ is a lift of }\gamma,\ d(o,\hat{\gamma})\le R\}
\le C_1\ell(\gamma).
\end{equation}

Now fix one such lift $\hat{\gamma}$. By \cref{lem:axis-basics},
$
\Stab_\Gamma(\hat{\gamma})=\langle r_{\hat{\gamma}}\rangle
$
for a primitive element $r_{\hat{\gamma}}$. Since $\hat{\gamma}$ projects to the primitive
closed geodesic $\gamma$,
$
\ell(r_{\hat{\gamma}})=\ell(\gamma).
$
If $A_{g(\omega)}={\hat{\gamma}}$, then
$
g(\omega)=r_{\hat{\gamma}}^j
$
for some $j\in\Z\setminus\{0\}$. Moreover,
$$
|j|\ell(\gamma)
=
\ell(g(\omega))
\le d(o,g(\omega)o)
\le nU_S.
$$
Hence the number of possible exponents $j$ is at most
\begin{equation}\label{eq: length upper bound of g omega}
    2\left\lfloor\frac{nU_S}{\ell(\gamma)}\right\rfloor
\le
\frac{2nU_S}{\ell(\gamma)}.
\end{equation}
For each fixed $j$, freeness of the positive semigroup on $S$ implies
that the group element $r_{\hat{\gamma}}^j$ is represented by at most one positive
word. Thus a fixed lift ${\hat{\gamma}}$ can arise from at most
\begin{equation}\label{eq:fixed-lift-word-bound}
\frac{2nU_S}{\ell(\gamma)}
\end{equation}
words in $S^n$.

Every word whose projected axis has geometric image $\gamma$ has a
unique axis $A_{g(\omega)}$, and this axis is one of the lifts counted
in \eqref{eq:nearby-lift-count}. Summing
\eqref{eq:fixed-lift-word-bound} over those lifts therefore gives
$$
\begin{aligned}
\#\{\omega\in S^n:
\pi_M(A_{g(\omega)})\text{ has geometric image }\gamma\}
\le
\bigl(C_1\ell(\gamma)\bigr)
\frac{2nU_S}{\ell(\gamma)}=
2C_1U_Sn.
\end{aligned}
$$
The lemma follows with $C_S=2C_1U_S$.
\end{proof}

\begin{proof}[Proof of \cref{thm:main}]
For any $0<\varepsilon<1$, choose $S$ by
\cref{prop:free-semigroup} so that
$$
\frac{\log|S|}{U_S}>2-\varepsilon>1.
$$
Thus \eqref{eq:entropy-gap} holds, and \cref{thm:simple-prob} applies.
The alphabet $S$, and hence $U_S$ and $C_S$, are fixed from now on.

For all sufficiently large $n$, \cref{thm:simple-prob} gives at least
$$
\frac{|S|^n}{2}
$$
words $\omega\in S^n$ whose projected axes are simple closed geodesics. Indeed, if $g(\omega)=r^j$ is a proper power with $r$
primitive, then
$$
A_{g(\omega)}=A_r. 
$$

Every closed geodesic obtained in this way has length at most
$nU_S$. In fact, if $\omega=(s_1,\ldots,s_n)$, by \eqref{eq: length upper bound of g omega} and a triangle inequality,  
$$
\ell(r)
\le
|j|\ell(r)
=
\ell(g(\omega))
\le
d(o,g(\omega)o)\le
nU_S.
$$
Thus the number of distinct primitive closed geodesics arising from these words is at most
$N_M^{\mathrm{simp}}(nU_S)$.

By \cref{lem:multiplicity}, each fixed closed geodesic 
is labelled by at most $C_Sn$ words in $S^n$. Hence
$$
\frac{|S|^n}{2}
\le
C_Sn\,N_M^{\mathrm{simp}}(nU_S),
$$
and therefore
\begin{equation}\label{eq:word-geodesic-lower}
N_M^{\mathrm{simp}}(nU_S)
\ge
\frac{|S|^n}{2C_Sn}.
\end{equation}

Now let $L$ be sufficiently large and put
$$
n=\left\lfloor\frac{L}{U_S}\right\rfloor.
$$
Then $n$ is sufficiently large for
\eqref{eq:word-geodesic-lower}, and $nU_S\le L$. Monotonicity of
$N_M^{\mathrm{simp}}$ therefore gives
$$
N_M^{\mathrm{simp}}(L)
\ge
N_M^{\mathrm{simp}}(nU_S)
\ge
\frac{|S|^n}{2C_Sn}.
$$
Taking logarithms and dividing by $L$, we obtain
\begin{equation}\label{eq:entropy-lower-prelimit}
\frac1L\log N_M^{\mathrm{simp}}(L)
\ge
\frac nL\log|S|
-
\frac{\log(2C_Sn)}{L}.
\end{equation}

Since $n=\lfloor L/U_S\rfloor$,
$
\frac{L}{U_S}-1<n\le\frac{L}{U_S},
$
and hence
$$
\frac nL\longrightarrow\frac1{U_S}
\tn{ and }
\frac{\log(2C_Sn)}{L}\longrightarrow0.
$$
Taking the lower limit in
\eqref{eq:entropy-lower-prelimit} gives
$$
\liminf_{L\to\infty}
\frac1L\log N_M^{\mathrm{simp}}(L)
\ge
\frac{\log|S|}{U_S}
>
2-\varepsilon.
$$
Since $\varepsilon>0$ is arbitrary,
\begin{equation}\label{eq:lower-entropy}
\liminf_{L\to\infty}
\frac1L\log N_M^{\mathrm{simp}}(L)
\ge2.
\end{equation}

The prime geodesic theorem of G. Margulis \cite{Margulis} for compact negatively curved manifolds gives
$$
\#\{\text{primitive closed geodesics of length at most }L\}
\sim \frac{e^{2L}}{2L}
$$
in dimension three. Since primitive simple closed
geodesics form a subset,
$$
\limsup_{L\to\infty}
\frac1L\log N_M^{\mathrm{simp}}(L)
\le2.
$$
Together with \eqref{eq:lower-entropy}, this proves \cref{thm:main}.
\end{proof}

\section{Nonsimple closed geodesics in arithmetic manifolds of type I}\label{sec:dense-nonsimple}
Recall that Chinburg--Reid \cite{CR} construct closed hyperbolic $3$-manifolds all of whose closed geodesics are simple. Our goal here is to prove \Cref{thm: nonsimple dense in arithmetic}, which states that, for every closed arithmetic manifold of type I, the unit tangent vectors to nonsimple closed geodesics are dense in $T^1M$. 
\begin{proof}
We start with $n=3$. 
    Let $M$ be a closed arithmetic hyperbolic $3$-manifold of type I. Then there exists a sequence $S_i$ of pairwise noncommensurable closed totally geodesic surfaces. 
    Each $S_i$ is the immersion of a closed hyperbolic surface $\iota_i: F_i \to M$.
   For simplicity of notation, we use $\gt S_i$ to denote pushforward $(\iota_i)_*$ of $\gt F_i$. 

     By Ratner--Shah \cite{rmRatner'stheorem,snRatnerTheorem}, $\gt S_i $ converge to $\gt M$ in the Hausdorff metric. Let $\gamma$ be a complete geodesic. We may assume that $\gamma$ is not in $S_i$ for any $i$. Working in the universal cover, there exist planes $P_i$ which cover $S_i$, so that $P_i$ converge to a plane $P$ which contains $\tilde{\gamma}$ (a fixed lift of $\gamma$). 
    Fix $\tilde{x}\in \tilde{\gamma}$ as a basepoint, and consider $\tilde{v}=\tilde{\gamma}'(\tilde{x})\in T^1\tilde{\gamma}$. 
    By applying the nearest point projection of $\tilde{v}$ to $P_i$, we obtain unit vectors $(\tilde{x}_i, \tilde{v}_i) \in T^1P_i$, which are tangent to complete geodesics $\tilde{\gamma}_i\subset P_i$. By the uniqueness of geodesics through a unit vector, the geodesics $\tilde{\gamma}_i$ converge to $\tilde{\gamma}$ on compact subsets in the Hausdorff metric. We then project  $\tilde{\gamma}_i$ to $M$ and obtain geodesic $\gamma_i$. 

     By J. Birman--C. Series \cite{bsGeodesicsBoundedIntersection}, the set of tangent vectors to simple complete geodesics is nowhere dense in $T^1F_i$. Since periodic orbits of the geodesic flow are dense in $T^1F_i$, the tangent vectors to nonsimple closed geodesics are dense in $T^1F_i$. Since $\iota_i$ is a local isometry, $d\iota_i$ is injective and preserves the two distinct tangent directions at a self-intersection. Hence the image of a nonsimple closed geodesic in $F_i $ remains nonsimple in $M$.
    
    Thus nonsimple closed geodesics on $S_i\subset M$ are dense. In particular, let $\varepsilon_i>0$ be a sequence of numbers tending to $0$ and fix $(x,v) \in T^1\gamma$. Then there exist $(x_i, v_i) \in T^1\gamma_i$, nonsimple closed geodesics $\gamma_i'$, and $(x_i', v_i')\in T^1\gamma_i'$, such that 
    $$ d((x_i, v_i), (x, v)) \leq \varepsilon_i, \tn{ and } d((x_i', v_i'), (x_i, v_i)) \leq \varepsilon_i. $$
This completes the proof when the dimension of $M$ is $3$. 

Assume that $M$ is a closed arithmetic hyperbolic $n$-manifold of type I $(n\geq 4)$ and the theorem holds for such manifolds of dimension $\leq n-1$. Then $M$ contains a sequence of pairwise noncommensurable closed totally geodesic hypersurfaces $S_i$ which are also arithmetic of type I. By Ratner--Shah again $\mg_{n-1} S_i$ converge to $\mg_{n-1} M$. On each $S_i$, unit vectors tangent to nonsimple closed geodesics are dense by the induction hypothesis. Thus, by a similar argument, for any $v\in T^1M$,
there exist unit vectors $v_i$ tangent to nonsimple closed geodesics $\gamma_i\subset M$ so that $v_i$ converge to $v$. 
\end{proof}
Also relying on Ratner--Shah type rigidity, the first author \cite{hxhNearlyGeodesicFilling} proves that in a closed hyperbolic $n$-manifold ($n\geq 3$), all but finitely many commensurability classes of closed totally geodesic hypersurfaces are filling, thus answering the question posed by Wu--Xue \cite{wxPrimeGeodesicTheorem} in higher dimensions.

\section*{Acknowledgment}
We are grateful to Yunhui Wu for suggesting the Chinburg--Reid question to us, and for sharing many insights on random hyperbolic geometry. We thank Nathan Dunfield for inspiring conversations about random $3$-manifolds and \cite{dtRandom3-manifolds}. 
The idea of establishing the existence of infinitely many simple closed geodesics by investigating random ones was initiated when XHH was doing random walk at the Tsinghua Sanya International Mathematics Forum, while participating in ``Geometric topology and their related topics''. He would like to thank the organizers and the center for their hospitality. 
Alan Reid informed the authors of a hyperbolic manifold which contains nonsimple closed geodesics but no closed totally geodesic surfaces. This has helped us abandon one approach and we are grateful for his insights. 
We also would like to thank Yitwah Cheung and Jinxin Xue for many helpful discussions on dynamical systems. We thank Nick Miller for explaining the idea that the totally geodesic submanifolds in arithmetic manifolds form a hierarchy, which plays a role in the induction argument in \Cref{thm: nonsimple dense in arithmetic}.

We have benefited from ChatGPT and Google Gemini in locating references, verifying computations, understanding various mathematical concepts, and polishing the exposition.  

XHH is partially supported by the start-up grants at Shanghai Institute for Mathematics and Interdisciplinary Sciences and NSFC No.\ 12501084. BY is supported by the start-up grants at Shanghai Institute for Mathematics and Interdisciplinary Sciences.

\bibliographystyle{alpha}
\bibliography{ref}
\end{document}